\newtheorem{thm}{Theorem}[section]
\newtheorem*{thm*}{Theorem}
\newtheorem{lem}[thm]{Lemma}
\newtheorem{fact}[thm]{Fact}
\newtheorem{prop}[thm]{Proposition}
\newtheorem*{prop*}{Proposition}
\newtheorem{cor}[thm]{Corollary}
\newtheorem*{cor*}{Corollary}
\theoremstyle{definition}
\newtheorem{defn}[thm]{Definition}
\newtheorem*{defn*}{Definition}
\newtheorem{remark}[thm]{Remark}
\newtheorem{question}[thm]{Question}
\newtheorem{example}[thm]{Example}
\newtheorem*{question*}{Question}
\newtheorem*{Pquestion*}{Popa's question}
\newtheorem*{conv*}{Convention}
\def\bb{\mathbb}
\def\bb{\mathbb}
\def\cal{\mathcal}
\def\dotminussym#1#2{%
  \setbox0=\hbox{$\m@th#1-$}%
  \kern.5\wd0%
  \hbox to 0pt{\hss\hbox{$\m@th#1-$}\hss}%
  \raise.6\ht0\hbox to 0pt{\hss$\m@th#1.$\hss}%
  \kern.5\wd0}
\DeclareMathOperator{\tr}{tr}
\newcommand\bN{{\mathbb N}}
\def \d{\mathbf{D}}
\newcommand{\norm}[1]{{\left\lVert #1\right\rVert}}
\newcommand{\onorm}[1]{\left\lVert #1 \right\rVert_{\infty}}
\newcommand{\tnorm}[1]{\left\lVert #1 \right\rVert_{2}}
\NewDocumentCommand{\cgen}{m g}{%
  \IfNoValueTF{#2}
  {\mathrm{C}^{\mbox{$*$}} \langle #1 \rangle}
  {\mathrm{C}^{\mbox{$*$}} \langle #1 : #2 \rangle}
}%
\title{Effective subfactor theory}
\author{Alec Fox and Isaac Goldbring}
\thanks{Goldbring was partially supported by NSF grant DMS-2054477.}
\address{Department of Mathematics\\University of California, Irvine, 340 Rowland Hall (Bldg.\# 400),
Irvine, CA 92697-3875}
\email{isaac@math.uci.edu}
\urladdr{http://www.math.uci.edu/~isaac}
\begin{document}

 \begin{abstract}
We characterize when a subfactor $N\subseteq M$ is oracle computable relative to a presentation of the ambient factor $M$ in terms of computability of the Jones basic construction, in terms of computable Pismner-Popa bases, and in terms of computability of the conditional expectation map.  We illustrate our main theorem with some examples.
 \end{abstract}
\maketitle

\section{Introduction}

This paper contributes to the recent literature on effective structure theory as it applies to operator algebras \cite{foxpres, KEP, gold, universal, hyper}  In particular, we study in a more systematic manner the question first raised in \cite{gold}, namely the complexity (in the sense of computability theory) of a subfactor inclusion.

More precisely, suppose that $N\subseteq M$ is a finite-index inclusion of II$_1$ factors\footnote{In this note, all II$_1$ factors are assumed to be separably acting.}; the rough question we wish to study is how much more complicated is the inclusion $N\subseteq M$ than just the complexity contained in $M$ itself?  In this paper, we introduce three natural notions of what it means for the inclusion $N\subseteq M$ to be no more complicated than $M$ itself and show that these three notions coincide.

Before explaining these three approaches to the complexity of a subfactor inclusion $N\subseteq M$, we must briefly explain how we are to compare its complexity to the complexity of the ambient factor $M$.  
First, the entire discussion is relative to a given \textbf{presentation} $M^\#$ of $M$, which is simply a countable sequence from the unit ball of $M$ that generates a SOT-dense $*$-subalgebra of $M$.  
With a presentation $M^\#$ of $M$ in hand, one can then reason about $M$ from an algorithmic viewpoint.
The most immediate question is how complicated (in the sense of Turing degree) must an algorithm be in order to calculate the $*$-algebra operations and the $2$-norm of $M$ from the presentation $M^\#$.
In general, we do not expect these operations to be computable (although this is the case for the standard presentation of the hyperfinite II$_1$ factor $\cal R$).  Instead, our standing assumption is that the presentation $M^\#$ of $M$ is $\d$-computable, where $\d$ is some (Turing) oracle that is knowledgeable enough to be able to carry out the aforementioned calculations.

We can now rephrase our main question:  if the presentation $M^\#$ of $M$ is $\d$-computable, what does it mean to say that the subfactor inclusion $N\subseteq M$ is also $\d$-computable?

One reasonable interpretation of this question involves the basic construction of Jones, which is the II$_1$ factor $M_1$ generated by $M$ (viewed as concretely represented on $L^2(M)$ via its standard representation) and the orthogonal projection $e_N:L^2(M)\to L^2(N)$.  The presentation $M^\#$ of $M$ induces a natural presentation $M_1^\#$ of $M_1$ obtained by simply adding $e_N$ to the presentation.  One can then interpret our rough question from above as:  when is $M_1^\#$ also $\d$-computable?  (From the definition, $M_1^\#$ must be at least as complicated as $M^\#$.)

Another perspective on the question comes from considering Pimsner-Popa bases for the inclusion.  A Pimsner-Popa basis for the inclusion $N\subseteq M$ leads to an elegant description of $M$ as a finitely generated projective right $N$-module.  It is thus reasonable to suspect that the subfactor is $\d$-computable if it has a $\d$-computable (with respect to $M^\#$) Pismner-Popa basis and such that the inclusion of $N^\dagger$ into $M^\#$ is $\d$-computable with respect to some presentation $N^\dagger$ of $N$. Our main theorem asserts that (given a precise notion of $\d$-computable Pimsner-Popa basis), this notion coincides with the $\d$-computability of $M_1^\#$.

One final perspective on the $\d$-computability of subfactors comes from considering the $\d$-computability of the conditional expectation $E_N:M^\#\to M^\#$.  It turns out that we were not able to show that this is equivalent to the previous two notions on its own but is equivalent to the previous two notions when combined with the assumption that the index $[M:N]$ is a $\d$-computable real number.  In certain instances (such as when $[M:N]\leq 4$ or when $M$ has finite depth), the index is simply computable and thus the $\d$-computability of $E_N$ is equivalent to the previous two notions.  We consider some examples of our main theorem in action in the final section of the paper.

It follows from our main result that if $M_1^\#$ is $\d$-computable, then so are the iterated basic constructions $M_n^\#$ comprising the Jones tower.

Two of the three equivalent conditions in our main theorem involve a conjunction of two statements.  It is natural to wonder if either of those conjunctions can be simplified to one of their conjuncts.  We conclude the paper with two partial results along these lines.  We show that if $E_N$ is $\d$-computable, then $M_1^\#$ is $\d'$-computable (so $M_1^\#$ is at most one ``jump'' of complexity higher than $M^\#$).  We also show that if there is some presentation $N^\dagger$ of $N$ such that the inclusion $N^\dagger\hookrightarrow M^\#$ is $\d$-computable, then $M_1^\#$ is $\d''$-computable (so at most two ``jumps'' of complexity higher).  It would be interesting to see if these jumps are actually necessary.

In future work, we plan on studying effective aspects of other objects related to a subfactor, such as its standard invariant and derived tower.

We assume that the reader is familiar with basic facts about tracial von Neumann algebras but not necessarily basic subfactor theory; we present what we need about subfactor theory in Section 4.  We also assume that the reader is familiar with very basic computability theory; the uninitiated reader can consult \cite[Section 2]{gold}, where it was presented with an eye towards the issues that concern us here.  Nevertheless, we do discuss the basic notions concerning presentations of tracial von Neumann algebras in the next section.

We thank David Penneys for many useful discussions regarding this work.

\section{Generalities on Presentations}

In this section, we discuss the notion of a presentation of a tracial von Neumann algebra and collect some basic facts to be used throughout the paper.

\begin{defn}
Given a tracial von Neumann algebra $M$, a \textbf{presentation} of $M$ is a countable sequence $(a_n)_{n\in \bb N}$ of elements of the unit ball of $M$ that generate a (SOT-) dense *-subalgebra of $M$.  The elements of the presentation are sometimes called the \textbf{special points} of the presentation.
\end{defn}

We use letters like $M^\#$ and $M^\dagger$ to denote presentations of $M$.  

\begin{defn}
Given a presentation $M^\#$ of a tracial von Neumann algebra, the set of elements of $M$ obtained by closing the elements of the presentation under addition, multiplication, adjoint, and multiplication by elements of $\bb Q(i)$ are referred to as the \textbf{rational points} of $M^\#$.
\end{defn}

Throughout, we let $\cal F(k) = \cgen{c_1,\ldots,c_k}{\norm{c_i} \leq 1}$ be the universal contraction C*-algebra on $k$ generators.  

Given a presentation $M^\#$ of $M$, there is an associated computable enumeration $(x_n)_{n\in \bb N}$ of the rational points of the presentation.  
Moreover, there is a computable functional $\cdot^\flat$ on the rational points which computes a na\"ive upper bound on the operator norm.
Specifically, if $x_n = p(a_1,\ldots,a_k)$ is a rational point (so $a_1,\ldots,a_k$ are special points of $M^\#$ and $p(w_1,\ldots,w_k)$ is a $*$-polynomial with coefficients from $\bb Q(i)$), then we set \begin{align*}
x_n^\flat &:= \|p(c_1,\ldots,c_k)\|_{\cal F(k)}\\
&=\sup\{\norm{p(z_1,\ldots,z_k)}_A : A \text{ is a C*-algebra}, \norm{z_i}_A \leq 1\} .
\end{align*}
In particular, $a_n^\flat = 1$ for every special point $a_n$ of the presentation.
Since the standard presentation of $\cal F(k)$ is computable by \cite[Corollary 3.8]{foxpres}, it follows that $x_n^\flat$ is a computable real number, uniformly in $n$.

A key consequence of our set-up is the following:

\begin{lem}\label{flatrationalpoints}
    For any rational point $x$ of $M^\#$ with $\onorm{x} \leq 1$, there is a sequence $(x_n)_{n\in \bb N}$ of rational points of $M^\#$ such that $\onorm{x_n - x} \to 0$ with $x_n^\flat < 1$ for all $n$.
\end{lem}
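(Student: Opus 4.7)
The plan is to use that $\cdot^\flat$ is assigned to a rational point via a specific polynomial expression, so we have freedom to choose a different polynomial representation of a nearby element. The key ingredient is that the unique $*$-homomorphism $\pi\colon \cal F(k)\to M$ sending $c_i\mapsto a_i$ factors through an isometric embedding $\cal F(k)/I \hookrightarrow M$, where $I:=\ker\pi$. In particular, if $p$ is the $*$-polynomial giving the canonical expression of $x = p(a_1,\ldots,a_k)$, then $\norm{p+I}_{\cal F(k)/I} = \onorm{x} \leq 1$.

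Given $\epsilon>0$, I would proceed in three steps. First, by the definition of the quotient norm, pick $j\in I$ with $\norm{p+j}_{\cal F(k)} < 1+\epsilon$. Second, using the norm-density of rational $*$-polynomials in $c_1,\ldots,c_k$ inside $\cal F(k)$, approximate $j$ by a rational $*$-polynomial $j'$ with $\norm{j'-j}_{\cal F(k)} < \epsilon$. Then $\norm{p+j'}_{\cal F(k)} < 1+2\epsilon$, and $\onorm{j'(a_1,\ldots,a_k)} \leq \norm{j'-j}_{\cal F(k)} < \epsilon$ because $j\in I$ and $\pi$ is contractive. Third, set $q := (1-3\epsilon)(p+j')$, viewed as a rational $*$-polynomial, and let $y := q(a_1,\ldots,a_k) = (1-3\epsilon)(x + j'(a_1,\ldots,a_k))$ be the corresponding rational point of $M^\#$. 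Then
\[ y^\flat = \norm{q(c_1,\ldots,c_k)}_{\cal F(k)} = (1-3\epsilon)\norm{p+j'}_{\cal F(k)} < (1-3\epsilon)(1+2\epsilon) < 1, \]
while $\onorm{y - x} \leq 3\epsilon\onorm{x} + (1-3\epsilon)\epsilon \leq 4\epsilon$. Letting $\epsilon = 1/n$ produces the desired sequence.

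The main obstacle is ensuring that $y^\flat$ is \emph{strictly} below $1$. The infimum in the quotient norm need not be attained, so even after optimizing $j$ we can only guarantee $\norm{p+j'}_{\cal F(k)}$ to be close to (but not necessarily below) $\onorm{x}\leq 1$; moreover, $j$ itself need not be a rational polynomial, which forces the density-based approximation in Step 2 and an additional loss of $\epsilon$. The small scaling factor $(1-3\epsilon)$ in Step 3 is what converts the resulting weak bound into the strict inequality $y^\flat<1$, at the price of an additional $O(\epsilon)$ perturbation of $x$.
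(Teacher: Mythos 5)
Your proof is correct, though it takes a slightly different route than the paper. The paper invokes the exact norm-preserving lift of C*-quotient maps (\cite[2.2.10]{rordamktheory}): for the surjection $\pi\colon\cal F(k)\to\cgen{a_1,\ldots,a_k}$ one finds $z\in\cal F(k)$ with $\pi(z)=x$ and $\norm{z}=\onorm{x}\leq 1$ on the nose, after which it is a one-line matter to approximate $z$ by rational polynomials $z_n$ with $\norm{z_n}<1$. You instead use only the \emph{definition} of the quotient norm as an infimum (which gives a lift of norm $<1+\epsilon$, not $\leq 1$), combined with density of rational polynomials in $\cal F(k)$ and an explicit $(1-3\epsilon)$-scaling factor to force the strict inequality. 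What the paper's route buys is cleanliness: once you have the exact lift, no $\epsilon$-bookkeeping is needed. What your route buys is that you avoid the exact-lift theorem (which is a genuine C*-algebraic fact requiring some machinery to prove) in favor of the elementary definition of the quotient norm; the price is the fiddlier estimate. Both arguments rely on the same underlying structural input—that $\cal F(k)/\ker\pi\hookrightarrow M$ is isometric—so the two proofs are close in spirit. One minor point worth stating explicitly: you should take $\epsilon<1/3$ (which $\epsilon=1/n$ for $n\geq 4$ handles) so that the scaling factor $1-3\epsilon$ is positive; this is implicit but harmless.
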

\begin{proof}
    Let $x = p(a_1,\ldots,a_k)$.
    Let $\pi : \cal{F}(k) \to \cgen{a_1,\ldots,a_k} \subseteq M$ be the natural surjection defined by $\pi(c_i) = a_i$.
    By \cite[2.2.10]{rordamktheory}, there exists $z \in \cal{F}(k)$ such that $\norm{z} = \onorm{x} \leq 1$ and $\pi(z) = x$.
    Let $(z_n)$ be a sequence of rational *-polynomials in $c_1,\ldots,c_k$ such that $\norm{z_n - z} \to 0$ and $\norm{z_n} < 1$ for all $n$.
    Let $x_n = \pi(z_n)$, so $x_n$ is a rational point of $M^\#$ and $x_n^\flat = \norm{z_n} < 1$.
    It remains to note that $\onorm{x_n - x} = \onorm{\pi(z_n - z)} \leq \norm{z_n - z} \to 0$.
\end{proof}

\begin{cor}\label{flatfund}
For any $x\in M$ with $\onorm{x} \leq 1$, there is a sequence $(x_n)_{n\in \bb N}$ of rational points of $M^\#$ such that $\tnorm{x_n - x} \to 0$ with $x_n^\flat < 1$ for all $n$.
\end{cor}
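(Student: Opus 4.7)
The plan is to combine Lemma \ref{flatrationalpoints} with the Kaplansky density theorem. The rational points of $M^\#$ form a $\bb Q(i)$-$*$-subalgebra of $M$ whose SOT-closure is all of $M$: indeed, the special points generate a SOT-dense $*$-subalgebra of $M$ by the definition of a presentation, and $\bb Q(i)$ is dense in $\bb C$. A routine adaptation of the usual proof of Kaplansky density (approximating the continuous function $f(t)=2t/(1+t^2)$ uniformly on $[-1,1]$ by polynomials with $\bb Q(i)$ coefficients rather than $\bb C$ coefficients) then applies, so the $\onorm{\cdot}$-unit ball of the rational points is SOT-dense in the $\onorm{\cdot}$-unit ball of $M$.

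Now fix $x \in M$ with $\onorm{x} \leq 1$. By the above, there exist rational points $y_n$ with $\onorm{y_n} \leq 1$ and $y_n \to x$ in SOT. Evaluating the SOT-convergence on the canonical cyclic vector $\hat 1 \in L^2(M)$ yields $\tnorm{y_n - x} \to 0$. For each $n$, Lemma \ref{flatrationalpoints} applied to $y_n$ produces rational points $(z_{n,k})_{k \in \bb N}$ with $z_{n,k}^\flat < 1$ for all $k$ and $\onorm{z_{n,k} - y_n} \to 0$ as $k \to \infty$; in particular, $\tnorm{z_{n,k} - y_n} \to 0$. A standard diagonal choice $k(n)$ with $\tnorm{z_{n,k(n)} - y_n} < 1/n$ then gives rational points $x_n := z_{n,k(n)}$ satisfying $x_n^\flat < 1$ and, by the triangle inequality, $\tnorm{x_n - x} \leq \tfrac{1}{n} + \tnorm{y_n - x} \to 0$.

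The only nontrivial step is justifying the Kaplansky density argument for the $\bb Q(i)$-$*$-subalgebra of rational points, but this is standard. An alternative route, which avoids citing Kaplansky density directly, would be to first approximate $x$ in $2$-norm by rational points $w_n$ (with no control on operator norm) and then apply a bounded Borel functional calculus argument to truncate; however, this seems more cumbersome than simply using Kaplansky density as above.
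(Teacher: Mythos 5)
Your proof is correct and takes essentially the same approach as the paper: apply Kaplansky density to produce rational points $y_n$ with $\onorm{y_n}\leq 1$ converging to $x$ in $2$-norm, then invoke Lemma \ref{flatrationalpoints} on each $y_n$ and diagonalize. The paper is terser, taking the applicability of Kaplansky density to the $\bb Q(i)$-$*$-algebra of rational points for granted, while you spell out why the usual argument adapts to rational coefficients — a reasonable point to flag, but the same proof.
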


\begin{proof}
By Kaplansky density, there is a sequence of rational points $y_n$ converging to $x$ in the $2$-norm, each with $\onorm{y_n}\leq 1$. 
For each rational point $y_n$, by Lemma \ref{flatrationalpoints}, we can choose a rational point $x_n$ close enough with $x_n^\flat < 1$.
\end{proof}

Throughout this paper, $\d$ denotes a Turing oracle.

\begin{defn}
If $M^\#$ is a presentation of $M$, then $x\in M$ is a \textbf{$\d$-computable point of $M^\#$} if there is a $\d$-algorithm such that, upon input $k\in \mathbb N$, returns a rational point $p\in M^\#$ with $\|x-p\|_2<2^{-k}$.
\end{defn}

The following lemma is clear.

\begin{lem}
The $\bf D$-computable points of $M^\#$ are closed under addition, multiplication, adjoint, and multiplication by rational complex scalars.
\end{lem}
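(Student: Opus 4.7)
The plan is to verify each closure property by composing the given $\mathbf{D}$-algorithms for $x$ and $y$ with arithmetic on rational points, while tracking the error in the $2$-norm. The cases of addition, adjoint, and rational scalar multiplication are essentially immediate from the basic properties of the $2$-norm: subadditivity, $*$-invariance, and homogeneity. Concretely, on input $k$ one calls the given algorithms at precision $k+1$ to obtain rational $p,q$ with $\tnorm{x-p}, \tnorm{y-q} < 2^{-(k+1)}$, and outputs $p+q$, $p^*$, or (for $\lambda \in \bb Q(i)$) $\lambda p$ after absorbing $|\lambda|$ into the requested precision in the scalar case. In each case the output is a rational point of $M^\#$ approximating the target to within $2^{-k}$.

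The substantive case is closure under multiplication. Starting from $xy - pq = x(y-q) + (x-p)q$, one gets the bound
\[
\tnorm{xy - pq} \leq \onorm{x}\,\tnorm{y-q} + \tnorm{x-p}\,\onorm{q}.
\]
To force this below $2^{-k}$, I first run the algorithm for $y$ to obtain some rational $q$, then compute $q^\flat$, which by the setup of the previous section is a computable real number and an upper bound for $\onorm{q}$; then I call the algorithm for $x$ at precision $2^{-(k+1)}/q^\flat$ (with a harmless safeguard in case $q^\flat = 0$), making the second summand at most $2^{-(k+1)}$. For the first summand, I fix once and for all a rational upper bound $B \geq \onorm{x}$ and arrange the initial call to the algorithm for $y$ to return $q$ with $\tnorm{y-q} < 2^{-(k+1)}/B$. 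Summing the two estimates yields the desired $\tnorm{xy - pq} < 2^{-k}$.

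The one conceptual obstacle worth flagging is the usual one: multiplication in $M$ is not jointly $2$-norm continuous, so some operator-norm control is indispensable. On the rational side this is handled cleanly by $\cdot^\flat$, which is computable by the remarks following the definition of rational points. On the non-rational side $\onorm{x}$ need not be $\mathbf{D}$-computable from the given algorithm for $x$, but a finite bound $B$ exists merely because $x \in M$; since the conclusion of the lemma only asks for existence of a $\mathbf{D}$-algorithm computing $xy$, and not for one obtained uniformly from the algorithms for $x$ and $y$, hardwiring such a $B$ as a constant is legitimate and completes the argument.
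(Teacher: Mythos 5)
The paper gives no proof of this lemma---it is stated as ``clear''---so there is nothing to compare against; your task was to supply the argument the paper omits, and you have done so correctly. You isolate the one non-routine case: closure under multiplication, where the failure of joint $2$-norm continuity of multiplication in $M$ forces some operator-norm control. Your decomposition $xy - pq = x(y-q) + (x-p)q$, the estimate $\tnorm{xy-pq} \le \onorm{x}\tnorm{y-q} + \tnorm{x-p}\onorm{q}$, and the use of the computable bound $q^\flat \ge \onorm{q}$ on the rational factor are all correct, and $q^\flat$ is precisely the device the paper's presentation machinery provides for this purpose. Your observation that the lemma asks only for \emph{existence} of a $\mathbf{D}$-algorithm computing $xy$---not one obtained uniformly from codes for the algorithms approximating $x$ and $y$---is the right justification for hardwiring a rational constant $B \ge \onorm{x}$; such a $B$ exists because $x$ is a bounded operator, and non-uniformity costs nothing here. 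The addition, adjoint, and $\bb Q(i)$-scalar cases follow from subadditivity, $*$-invariance, and homogeneity of $\tnorm{\cdot}$, as you say. One small point worth spelling out: $q^\flat$ is only a computable real, so before dividing $2^{-(k+1)}$ by it you should pass to a rational overestimate of $q^\flat$ (and then request precision below the resulting quotient), which is the routine detail your ``harmless safeguard'' parenthetical gestures at.
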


\begin{defn}
If $M^\#$ is a presentation of $M$ and $\d$ is an oracle, then $M^\#$ is a \textbf{$\d$-computable presentation} if there is a $\d$-algorithm such that, upon input rational point $p\in M^\#$ and $k\in \mathbb N$, returns a rational number $q$ such that $|\|p\|_2-q|<2^{-k}$.
\end{defn}

If $M^\#$ is a $\d$-computable presentation, we refer to any code for a $\d$-algorithm as in the definition as a \textbf{code for the presentation $M^\#$}.  

Given a presentation $M^\#$ of $M$ whose special points are enumerated by $(a_n)_{n\in \bb N}$ and a projection $p\in M$, we let $(pMp)^\#$ denote the canonical induced presentation of the compression $pMp$ whose special points are $(pa_np)_{n\in \bb N}$.  Note that there is a computable function which, upon input a code for a $\d$-computable presentation $M^\#$ and a code for a $\d$-computable projection $p$ in $M^\#$, returns a code for the canonical presentation $(pMp)^\#$.  Note also that if $M^\#$ and $p$ are $\d$-computable, then every $\d$-computable point of $(pMp)^\#$ is a $\d$-computable point of $M^\#$, uniformly in the code for the presentation and a code for the point. 

The following lemma is clear:

\begin{lem}\label{cornerlemma}
There is a computable function $f_{corner}:\bb N^2\to \bb N$ such that if $x$ is the code of a $\d$-computable presentation $M^\#$ of a tracial von Neumann algebra $M$ and $y$ is the code of a $\d$-computable projection of $M^\#$, then $f_{corner}(x,y)$ is the code of the canonical $\d$-computable presentation $(pMp)^\#$ of $pMp$.
\end{lem}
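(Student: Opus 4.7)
The plan is to spell out the informal procedure sketched in the paragraph just above the lemma and verify it is uniform in the inputs. Given codes for a $\d$-computable presentation $M^\#$ and a $\d$-computable projection $p$, the target meta-algorithm $f_{corner}$ should output a code for a $\d$-algorithm that, on input a rational point $q$ of $(pMp)^\#$ together with an accuracy $k$, returns a rational within $2^{-k}$ of $\|q\|_2$.

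First, I would parse the input rational point. By definition, a rational point of $(pMp)^\#$ is a $*$-polynomial with $\bb Q(i)$-coefficients in the special points $pa_np$, and expanding such a product using $(pa_np)(pa_mp) = pa_npa_mp$ yields a $*$-polynomial expression $P(a_{n_1},\ldots,a_{n_m},p)$ in the special points of $M^\#$ and the symbol $p$. This expansion is clearly effective in the code of $q$.

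Next, I would use the code of $p$ together with Corollary \ref{flatfund} to effectively produce a $\d$-computable sequence $(p_j)$ of rational points of $M^\#$ with $\|p - p_j\|_2 < 2^{-j}$ and $p_j^\flat < 1$ (so in particular $\|p_j\|_\infty \leq 1$). Setting $q_j := P(a_{n_1},\ldots,a_{n_m},p_j)$ gives a rational point of $M^\#$ whose $2$-norm can be approximated using the code for $M^\#$. The standard telescoping estimate, exploiting the fact that the special points of $M^\#$, $p$, and each $p_j$ all have operator norm at most $1$, yields a bound
\[
\|q - q_j\|_2 \leq C_P \cdot \|p - p_j\|_2,
\]
where $C_P$ is a constant depending only on the polynomial $P$ (essentially the sum of $|\alpha|$ over monomials $\alpha w_{i_1}\cdots w_{i_\ell}$ of $P$, weighted by the number of $p$-slots in each monomial). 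Crucially, $C_P$ is computable from the code of $q$.

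Finally, the algorithm on input $(q,k)$ computes $C_P$, chooses $j$ with $C_P\cdot 2^{-j} < 2^{-k-1}$, computes $q_j$ from the code of $p$ and the parsed expansion of $q$, and calls the $M^\#$-algorithm to return a rational within $2^{-k-1}$ of $\|q_j\|_2$. The output is then within $2^{-k}$ of $\|q\|_2$. Since every step — parsing, computing $C_P$, calling the $p$-code, calling the $M^\#$-code — is performed by a single fixed meta-procedure applied to the input codes, we obtain the desired computable function $f_{corner}$. The only real substance is the Lipschitz estimate, and that is made effective precisely by the operator-norm control provided by Corollary \ref{flatfund}; beyond that, the proof is an exercise in bookkeeping uniformity.
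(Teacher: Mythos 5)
Your argument is correct and is essentially the proof that the paper leaves implicit (the paper declares the lemma ``clear'' after observing that rational points of $(pMp)^\#$ are $\d$-computable points of $M^\#$, uniformly; your telescoping estimate is exactly the computation that makes this observation effective). One small point worth spelling out: Corollary \ref{flatfund} as stated is only an existence statement, so to ``effectively produce'' the approximants $p_j$ with $p_j^\flat<1$ you should say that you perform a $\d$-search over all rational points, semi-deciding $p_j^\flat<1$ using computability of $\flat$ and checking $2$-norm proximity to $p$ against a slightly shrunken threshold via the code for $p$ — with termination guaranteed by Corollary \ref{flatfund}; this is routine but is the only step where the uniformity claim actually needs an argument.
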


\begin{defn}
Suppose that $M$ and $N$ are tracial von Neumann algebras with presentations $M^\#$ and $N^\dagger$ respectively.  Further suppose that $f:M^m\to N$ is a Lipshitz map (say with respect to the maximum metric on $M^m$)\footnote{The Lipshitz condition can be weakened to having a ``computable modulus of uniform continuity'' but we will not need this more general notion in this paper.}.  Then $f$ is a \textbf{$\d$-computable map from $M^\#$ into $N^\dagger$} if there is a $\d$-algorithm such that, upon input a tuple of rational points $\vec p\in (M^\#)^m$ and $k\in \mathbb N$, returns a rational point $p'\in N^\dagger$ such that $\|f(\vec p) - p'\|<2^{-k}$.  (Here, we use an effective numbering of $\mathbb N^m$ to effectively enumerate the $m$-tuples of rational points of $M^\#$.)
\end{defn}

Throughout the paper, $\bb C$ always has its standard presentation, that is, with $\bb Q(i)$ as its presentation.

\section{Useful technical lemmas}

In this section, we collect a number of useful technical lemmas stating that certain types of elements (such as projections and partial isometries) can, under certain assumptions, be found effectively.

\begin{fact}\label{nearprojection}
Suppose $0 < \epsilon < 1$ and $0 < \delta < \epsilon^2/48$.
Then for any tracial von Neumann algebra $M$ and $x\in M$, if $\onorm{x} \leq 1$, $\tnorm{x - x^*} \leq \delta$, and $\tnorm{x - x^2} \leq \delta$, then there is a projection $p\in M$ such that $\tnorm{x-p} < \epsilon$.
\end{fact}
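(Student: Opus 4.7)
The plan is to reduce to the self-adjoint case and then produce $p$ via Borel functional calculus as a spectral projection.

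First, I would replace $x$ by its self-adjoint part $y := \frac{1}{2}(x + x^*)$. Since $\|x-x^*\|_2 \leq \delta$, one has $\|y - x\|_2 \leq \delta/2$, and $\onorm{y} \leq 1$ by the triangle inequality, so the spectrum of $y$ lies in $[-1,1]$. I would then bound $\|y - y^2\|_2$ by the triangle inequality, writing
\[
\|y - y^2\|_2 \leq \|y - x\|_2 + \|x - x^2\|_2 + \|x^2 - y^2\|_2,
\]
and controlling the last term via $\|x^2 - y^2\|_2 \leq \onorm{x}\|x - y\|_2 + \|x - y\|_2\onorm{y}$, using that $\onorm{x},\onorm{y} \leq 1$. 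The upshot is $\|y - y^2\|_2 \leq C_1\delta$ for a small absolute constant $C_1$.

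Next, I would define $p := \chi_{[1/2,1]}(y)$ via Borel functional calculus; this is a projection in $M$ since $y$ is self-adjoint. Letting $\mu$ be the scalar spectral measure of $y$ with respect to $\tau$ (supported in $[-1,1]$), the key identity is
\[
\|y - p\|_2^2 \;=\; \int_{-1}^{1} \bigl(t - \chi_{[1/2,1]}(t)\bigr)^2 \, d\mu(t).
\]
A case analysis on the intervals $[-1,0]$, $[0,1/2]$, $[1/2,1]$ yields an inequality of the form $(t - \chi_{[1/2,1]}(t))^2 \leq C_2 |t - t^2|$ throughout $[-1,1]$ (for instance, for $t \in [1/2,1]$, $(1-t)^2 \leq (1-t) \leq 2t(1-t)$). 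Integrating and using $\|\cdot\|_1 \leq \|\cdot\|_2$, I obtain
\[
\|y - p\|_2^2 \leq C_2 \|y - y^2\|_1 \leq C_2 \|y - y^2\|_2 \leq C_1 C_2 \, \delta.
\]
So $\|y - p\|_2 \leq C_3 \sqrt{\delta}$.

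Combining via the triangle inequality, $\|x - p\|_2 \leq \delta/2 + C_3 \sqrt{\delta}$. For $\delta < \epsilon^2/48$, both terms are comfortably below $\epsilon/2$ once the constants are tracked (the $\sqrt{\delta}$ term is the dominant one, which is exactly why the hypothesis is of the form $\delta < c\epsilon^2$ rather than $\delta < c\epsilon$). The main conceptual step is the scalar inequality $(t - \chi_{[1/2,1]}(t))^2 \leq C_2|t-t^2|$, which converts an $L^2$ estimate on $y(1-y)$ into an $L^2$ estimate on $y - p$; everything else is bookkeeping with triangle inequalities and a symmetrization to get into the self-adjoint regime where functional calculus is available.
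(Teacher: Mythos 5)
Your proof is correct, and it is in essence a self-contained version of the argument the paper delegates to Connes. The paper reduces to the \emph{positive} contraction $z := x^*x$, checks that $\tnorm{x-z} \leq 2\delta$ and $\tnorm{z^2-z} < \epsilon^2/16$, and then cites \cite[Lemma 1.1.5]{connes} to obtain a projection $p$ with $\tnorm{z-p} \leq \epsilon/2$. You instead reduce to the \emph{self-adjoint} contraction $y := \tfrac12(x+x^*)$, whose spectrum may reach into $[-1,0]$, and build the projection directly as $p := \chi_{[1/2,1]}(y)$; the pointwise bound $(t - \chi_{[1/2,1]}(t))^2 \leq 2\,|t-t^2|$ on $[-1,1]$, integrated against the trace spectral measure of $y$, gives $\tnorm{y-p}^2 \leq 2\,\|y-y^2\|_1 \leq 2\tnorm{y-y^2}$, which is precisely the content of Connes' lemma, proved by hand. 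Both choices of self-adjoint approximant are legitimate, and with the constants tracked ($\tnorm{y-y^2}\leq \tfrac52\delta$, hence $\tnorm{y-p}\leq\sqrt{5\delta}$, plus $\tnorm{x-y}\leq\delta/2$) the bound comfortably lands under $\epsilon$ given $\delta < \epsilon^2/48$. The trade-off is clear: your argument is transparent and needs no external citation, at the cost of slightly looser constants. One small point worth flagging is that, because $y$ is not positive, your scalar inequality genuinely needs to hold on $[-1,0]$ as well (it does, since there $t^2 \leq |t| \leq |t(1-t)|$); the paper's choice $z = x^*x$ keeps the spectrum in $[0,1]$ and sidesteps that case entirely.
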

\begin{proof}
Let $z = x^*x$.
Then $0 \leq z \leq 1$ and \[\tnorm{x - z} \leq \tnorm{x - x^2} + \onorm{x}\tnorm{x - x^*} \leq 2\delta.\]
Hence, \begin{align*}
\tnorm{z^2 - z} &\leq \tnorm{z^2 - zx} + \tnorm{zx - z} \\
&\leq \onorm{z}\tnorm{z - x} + \onorm{x^*}\tnorm{x^2 - x} \\
&< \epsilon^2/16.
\end{align*}
Thus, by \cite[Lemma 1.1.5]{connes}, there is a projection $p$ such that $\tnorm{z - p} \leq \epsilon/2$, whence $\tnorm{x - p} < \epsilon$.
\end{proof}
 
\begin{defn}
    Given a presentation $M^\#$ of a tracial von Neumann algebra $M$ and rational $\epsilon\in (0,1)$, we say a rational point $x$ of $M^\#$ is an \textbf{$\epsilon$-quasi-projection} if $x^\flat < 1$, $\tnorm{x - x^*} < \epsilon^2/48$, and $\tnorm{x - x^2} < \epsilon^2/48$.
\end{defn}

\begin{lem}\label{quasi}
Fix a presentation $M^\#$ of a tracial von Neumann algebra $M$ and rational $\epsilon \in (0,1)$.
    \begin{enumerate}
        \item Every $\epsilon$-quasi-projection is $\epsilon$-close (in $2$-norm) to an actual projection.
        \item Every projection in $M$ is $\epsilon$-close (in $2$-norm) to an $\epsilon$-quasi-projection.
        \item If $M^\#$ is $\d$-computable, then the set of $\epsilon$-quasi-projections of $M^\#$ is $\d$-c.e.
    \end{enumerate}
\end{lem}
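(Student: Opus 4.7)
Part (1) should follow directly from Fact \ref{nearprojection}. The hypothesis $x^\flat < 1$ gives $\onorm{x} \leq x^\flat < 1$ so the operator-norm bound in that fact is satisfied, and the two strict inequalities $\tnorm{x-x^*}<\epsilon^2/48$ and $\tnorm{x-x^2}<\epsilon^2/48$ let us pick any (say rational) $\delta$ with $\max(\tnorm{x-x^*},\tnorm{x-x^2})\leq \delta<\epsilon^2/48$. Applying Fact \ref{nearprojection} with this $\delta$ produces a projection $p$ with $\tnorm{x-p}<\epsilon$.

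Part (2) should fall out of Corollary \ref{flatfund}. Since $\onorm{p}\leq 1$, that corollary provides a sequence of rational points $(x_n)$ of $M^\#$ with $x_n^\flat<1$ and $\tnorm{x_n-p}\to 0$. Using $p=p^*=p^2$, the triangle inequality gives $\tnorm{x_n-x_n^*}\leq 2\tnorm{x_n-p}$ and, via the telescoping estimate $\tnorm{x_n^2-p^2}\leq(\onorm{x_n}+\onorm{p})\tnorm{x_n-p}<2\tnorm{x_n-p}$, we get $\tnorm{x_n-x_n^2}\leq \tnorm{x_n-p}+\tnorm{p^2-x_n^2}<3\tnorm{x_n-p}$. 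Choosing $n$ large enough that $3\tnorm{x_n-p}<\epsilon^2/48$ and $\tnorm{x_n-p}<\epsilon$ simultaneously yields an $\epsilon$-quasi-projection that is $\epsilon$-close to $p$.

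For part (3), the plan is to enumerate rational points and verify each of the three defining conditions in a c.e. manner. The condition $x^\flat<1$ is plainly c.e. (indeed computable) since the discussion preceding Lemma \ref{flatrationalpoints} shows $x^\flat$ is a computable real uniformly in the index of the rational point, so we can wait until a computable upper bound for $x^\flat$ drops below $1$. The two conditions $\tnorm{x-x^*}<\epsilon^2/48$ and $\tnorm{x-x^2}<\epsilon^2/48$ are $\d$-c.e.: because $M^\#$ is a $\d$-computable presentation, the $2$-norm of any rational point is a $\d$-computable real uniformly in its code, and strict inequality between a $\d$-computable real and a rational is a $\d$-c.e. condition (halt once a $\d$-computed upper bound falls below $\epsilon^2/48$). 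Dovetailing the three semi-decision procedures across all rational points gives the desired $\d$-c.e.\ enumeration.

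I don't anticipate any serious obstacle: the content of parts (1) and (2) is essentially bookkeeping on top of Fact \ref{nearprojection} and Corollary \ref{flatfund}, and part (3) is a routine application of the fact that strict inequalities between $\d$-computable reals and rationals are $\d$-c.e. The only minor care points are choosing the auxiliary $\delta$ in part (1) (to bridge from the strict to the non-strict hypotheses of Fact \ref{nearprojection}) and controlling the cross-term $\onorm{x_n}$ in part (2), which is handled by the $x_n^\flat<1$ output of Corollary \ref{flatfund}.
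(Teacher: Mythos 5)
Your proof is correct and follows the same route as the paper's (which simply cites Fact \ref{nearprojection} for (1), Corollary \ref{flatfund} for (2), and declares (3) clear). You have merely filled in the routine bookkeeping — bridging the strict/non-strict hypotheses in (1), the triangle-inequality estimates turning $\tnorm{x_n-p}\to 0$ into the quasi-projection conditions in (2), and the dovetailing of semi-decision procedures in (3) — that the authors leave implicit.
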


\begin{proof}
Item (1) follows from Fact \ref{nearprojection} while item (2) follows from Corollary \ref{flatfund}.  Item (3) is clear.
\end{proof}

\begin{lem}\label{earliertracelemma}
Suppose that $M^\#$ is a $\bf D$-computable presentation of a tracial von Neumann algebra $M$. Then $(x,y) \mapsto \tr(x^*y):(M^\#)^2\to \bb C$ is a $\d$-computable map, uniformly in a code for $M^\#$.
\end{lem}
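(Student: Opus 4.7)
The plan is to reduce the computation of $\tr(x^*y)$ to computations of $2$-norms of rational points, which are available by definition of a $\d$-computable presentation. The key tool is the polarization identity
\[
\tr(x^*y) \;=\; \frac{1}{4}\sum_{k=0}^{3} i^{k}\,\tnorm{y + i^{k}x}^{2},
\]
which is immediate from $\tnorm{z}^2 = \tr(z^*z)$ and expanding, together with the observation that $\sum_{k} i^{k}=0$ and $\sum_k i^{2k}=0$.

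Given rational points $x,y$ of $M^\#$ and an input precision $2^{-k}$, the algorithm proceeds as follows. First, since $1,i,-1,-i \in \bb Q(i)$, each expression $y + i^k x$ is itself a rational point of $M^\#$, and the algorithm can produce a code for it uniformly. Second, using the $\d$-algorithm guaranteed by the $\d$-computability of $M^\#$, the algorithm computes a rational number $r_k$ with $|\tnorm{y+i^k x} - r_k| < \epsilon$ for each $k=0,1,2,3$, where $\epsilon$ is chosen in a moment. Third, the algorithm outputs the rational complex number $\frac{1}{4}\sum_{k=0}^{3} i^k r_k^2$.

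To fix $\epsilon$, the algorithm first obtains a rational upper bound $C$ on $\max_k \tnorm{y + i^k x}$; this is available because $\tnorm{y + i^k x} \leq (y+i^k x)^\flat$ and $(y+i^k x)^\flat$ is a computable real. Then $|r_k^2 - \tnorm{y+i^k x}^2| \leq (2C+\epsilon)\epsilon$, so choosing $\epsilon$ rational and small enough (e.g.\ $\epsilon < 2^{-k}/(2C+1)$ and $\epsilon < 1$) makes the output within $2^{-k}$ of $\tr(x^*y)$. Every step of this procedure is carried out uniformly from a code for $M^\#$ (the codes for the auxiliary rational points, the code for the $2$-norm approximation, and the rational upper bound $C$ are all obtained uniformly), establishing the uniformity clause.

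I do not expect a genuine obstacle here; the only mildly delicate point is the error propagation through squaring, which requires computing the upper bound $C$ on $\tnorm{y+i^k x}$ before choosing $\epsilon$, but this is handled by the computable functional $\cdot^\flat$ introduced in Section 2. The Lipschitz hypothesis built into the definition of a $\d$-computable map is trivially satisfied on bounded sets (with Lipschitz constant controlled by the $2$-norms of the arguments), which is all that is needed since the algorithm is only required to act on rational points together with an input precision.
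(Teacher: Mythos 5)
Your proof is correct and follows exactly the route the paper intends: the paper's proof is the single line ``This follows from the polarization identity,'' and you have spelled out that identity together with the error analysis (using $\cdot^\flat$ to bound $\tnorm{y+i^kx}$ before choosing $\epsilon$), which is precisely what makes the reduction to computable $2$-norms work. The only cosmetic issue is the variable clash between the summation index $k$ in the polarization formula and the precision parameter $2^{-k}$; rename one of them.
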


\begin{proof}
This follows from the polarization identity.
\end{proof}

\begin{lem}\label{identitylemma}
Suppose that $M^\#$ is a $\bf D$-computable presentation of a tracial von Neumann algebra $M$.  Then $1$ is a $\d$-computable point of $M^\#$, uniformly in a code for $M^\#$.
\end{lem}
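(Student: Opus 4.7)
The plan is to approximate $1$ in $2$-norm by $\epsilon$-quasi-projections whose trace is close to $1$, exploiting the fact that $1$ is itself a projection (of full trace) in $M$. Given target precision $2^{-k}$, I will choose a rational $\epsilon > 0$ so small that any $\epsilon$-quasi-projection whose computed trace is close to $1$ is in turn within $2^{-k}$ of $1$ in $2$-norm.

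The key quantitative estimate is as follows. Suppose $x$ is an $\epsilon$-quasi-projection of $M^\#$ with $\tr(x^*x) > 1 - \epsilon$. By Lemma \ref{quasi}(1) there is a projection $p\in M$ with $\tnorm{x-p} < \epsilon$, and since $\tnorm{x} \le \onorm{x} < 1$ and $\tnorm{p} \le 1$, one gets
\[
|\tr(p) - \tr(x^*x)| = |\tnorm{p}^2 - \tnorm{x}^2| \le 2\tnorm{x - p} < 2\epsilon,
\]
so $\tr(p) > 1 - 3\epsilon$. As $p$ is a projection, $\tnorm{1 - p}^2 = 1 - \tr(p) < 3\epsilon$, and therefore
\[
\tnorm{1 - x} \le \tnorm{1 - p} + \tnorm{p - x} < \sqrt{3\epsilon} + \epsilon.
\]
So I pick rational $\epsilon > 0$ with $\sqrt{3\epsilon} + \epsilon < 2^{-k}$.

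The $\d$-algorithm is then straightforward: enumerate the $\epsilon$-quasi-projections of $M^\#$ using Lemma \ref{quasi}(3), and for each one $x$ produced, use Lemma \ref{earliertracelemma} to compute a rational approximation $q$ to $\tr(x^*x)$ within $\epsilon/2$. Halt and return $x$ the first time $q > 1 - \epsilon/2$, which implies $\tr(x^*x) > 1 - \epsilon$ and hence $\tnorm{1-x} < 2^{-k}$ by the estimate above. The search terminates because, applying Lemma \ref{quasi}(2) to the projection $1\in M$, there exists an $\epsilon$-quasi-projection $x_0$ with $\tnorm{1 - x_0} < \epsilon$, for which $\tr(x_0^*x_0) = \tnorm{x_0}^2 \ge (1-\epsilon)^2 > 1 - 2\epsilon$, and such an $x_0$ will eventually appear in the enumeration.

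I do not expect any serious obstacle: the only thing to watch is choosing the constants so that a single pass through the enumeration works, and tracking uniformity. Uniformity in a code for $M^\#$ is immediate since the enumeration in Lemma \ref{quasi}(3) and the trace computation in Lemma \ref{earliertracelemma} are both uniform in such a code.
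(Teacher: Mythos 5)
Your strategy is the same one the paper uses: find a quasi-projection whose (computable) trace is close to $1$, apply the near-projection result to conclude it is close to the identity projection, and argue that the search terminates because $1$ itself is approximable by quasi-projections. There is, however, a genuine gap in the constants governing the halting criterion, and as written the algorithm is not guaranteed to terminate.

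The problem is the threshold $q > 1 - \epsilon/2$ paired with precision $\epsilon/2$ in the computation of $q$. For any $\epsilon$-quasi-projection $x$ one has $x^\flat < 1$, hence $\onorm{x} < 1$ and therefore $\tr(x^*x) = \tnorm{x}^2 < 1$ strictly. The only lower bound you can certify on the computed $q$ is $q > \tr(x^*x) - \epsilon/2$, and to guarantee this exceeds $1 - \epsilon/2$ you would need $\tr(x^*x) \ge 1$, which never happens. In particular, your witness $x_0$ only satisfies $\tr(x_0^*x_0) > 1 - 2\epsilon$, so the best certified bound is $q > 1 - 5\epsilon/2$, which does \emph{not} imply $q > 1 - \epsilon/2$. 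Thus you have not shown the search halts. The fix is routine: loosen the halting test to something like $q > 1 - 3\epsilon$ (which $x_0$ is guaranteed to satisfy since $1 - 5\epsilon/2 > 1 - 3\epsilon$), accept the weaker soundness conclusion $\tr(x^*x) > 1 - 7\epsilon/2$, propagate this through your estimate to get $\tnorm{1 - x} < \sqrt{11\epsilon/2} + \epsilon$, and then choose $\epsilon$ accordingly. Alternatively, avoid fixed-precision comparisons altogether and observe that for a $\d$-computable real the strict inequality $\tr(x^*x) > 1 - 3\epsilon$ is directly $\d$-semi-decidable, which is closer to how the paper phrases its search condition. Once the threshold is repaired, the argument is correct and matches the paper's in spirit, the only cosmetic difference being that the paper searches for rational $x$ with $xx^*x$ a quasi-projection of trace near $1$, while you search over quasi-projections $x$ and test $\tr(x^*x)$.
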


\begin{proof}
Given $k \in \bN$, search for a rational point $x$ such that $xx^*x$ is a $2^{-2k-3}$-quasi-projection with $|\tr(xx^*x) - 1| < 2^{-2k-3}$, where we know $x \mapsto \tr(xx^*x)$ is $\d$-computable by Lemma \ref{earliertracelemma}.
Such a rational point must exist by Corollary \ref{flatfund} when applied to $1$.
By Fact \ref{nearprojection}, there is a projection $p$ with $\tnorm{xx^*x-p}< 2^{-2k-3}$.
Then 
\begin{align*}
    \tnorm{p - 1}^2 &= 1 - \tr(p)\\
    &\leq |1 - \tr(xx^*x)| + \tnorm{xx^*x - p} (\text{by Cauchy-Schwarz}) \\
    &< 2^{-2k-2}.
\end{align*}
Thus $\tnorm{xx^*x - 1} \leq \tnorm{xx^*x - p} + \tnorm{p - 1} < 2^{-k}$.
\end{proof}
By the previous lemma, if $M^\#$ is a $\d$-computable presentation of a tracial von Neumann algebra $M$, then we can compute, uniformly in a code for $M^\#$, a $\d$-computable presentation of $M$ with $1$ as a special point which is $\d$-computably isomorphic to $M^\#$, uniformly in $M^\#$.
As a result, with no loss of generality, \textbf{in the rest of the paper, we always assume that $1$ is in fact a special point of $M^\#$}.

\begin{cor}\label{tracelemma}
If $M^\#$ is a $\d$-computable presentation of tracial von Neumann algebra $M$, then $\tr:M^\#\to \bb C$ is a $\d$-computable map, uniformly in a code for $M^\#$.
\end{cor}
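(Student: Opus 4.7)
The plan is to deduce this immediately from Lemma \ref{earliertracelemma} together with the standing assumption, established just before the corollary, that $1$ is a special point (and hence a rational point) of every presentation $M^\#$ we consider.

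Concretely, for any rational point $x$ of $M^\#$ we have $\tr(x) = \tr(1^\ast x)$, so the map $x \mapsto \tr(x)$ is the restriction of the two-variable map $(y,x) \mapsto \tr(y^\ast x)$ of Lemma \ref{earliertracelemma} to the slice $y = 1$. Since $1$ is a rational point of $M^\#$, feeding $(1, x)$ into the $\d$-algorithm provided by Lemma \ref{earliertracelemma} yields, on input a rational point $x$ and a precision $k \in \bb N$, a rational complex number within $2^{-k}$ of $\tr(x)$. This gives a $\d$-algorithm witnessing that $\tr: M^\# \to \bb C$ is $\d$-computable.

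Uniformity in a code for $M^\#$ follows from the uniformity already asserted in Lemma \ref{earliertracelemma} together with the uniformity in Lemma \ref{identitylemma} (which is what justifies the standing assumption that $1$ is a special point). Since no step of the argument beyond invoking these two lemmas is required, there is no real obstacle here; the corollary is essentially a one-line consequence of the two previous results.
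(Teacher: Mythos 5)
Your argument is exactly what the paper intends: the corollary appears immediately after the standing convention that $1$ is a special point, and the paper leaves the proof implicit precisely because it follows by plugging $y=1$ into Lemma~\ref{earliertracelemma}. The uniformity also follows as you say, via the uniformity in Lemma~\ref{identitylemma}.
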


    

\begin{lem}\label{aprojlemma}
There is a computable function $f_{ap}:\bb N^3\to \bb N$ such that if $x$ is the code of a $\d$-computable presentation $M^\#$ of a II$_1$ factor $M$ and $r$ is the code of a $\d$-computable real number $\lambda \in [0,1]$, then $f_{ap}(x,r,k)$ is the code of a $\d$-computable projection of $M^\#$ with trace within $1/2^k$ of $\lambda$.
\end{lem}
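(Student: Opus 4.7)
The plan is to $\d$-computably construct a 2-norm Cauchy sequence $(y_n)$ of rational quasi-projections of $M^\#$ whose traces all lie near $\lambda$; the limit will then automatically be a projection of trace $\lambda$. At step $n$, I search for a rational point $y_n$ such that (i) $y_n$ is an $\epsilon_n$-quasi-projection, (ii) $|\tr(y_n) - \lambda| < \epsilon_n$, and (for $n \geq 1$) (iii) $\tnorm{y_n - y_{n-1}} < \delta_n$, for appropriately chosen $\epsilon_n, \delta_n \to 0$. Each predicate is $\d$-c.e.\ uniformly in $y_n$ by Lemma~\ref{quasi}(3), Corollary~\ref{tracelemma}, the polarization identity applied to Lemma~\ref{earliertracelemma} (for the 2-norm distance), and $\d$-computability of $\lambda$, so the search succeeds as long as a valid $y_n$ exists.

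The existence argument is the heart of the proof and uses the II$_1$ factor hypothesis essentially. The subtle point is that naively searching for quasi-projections with trace near $\lambda$ does not force Cauchy-ness, since distinct $y_n$ could approximate distinct projections. By Lemma~\ref{quasi}(1), $y_{n-1}$ lies within $\epsilon_{n-1}$ of some projection $q$, whose trace is within $2\epsilon_{n-1}$ of $\lambda$. I then adjust $q$ to a projection $q'$ of trace exactly $\lambda$: since $M$ is a II$_1$ factor, there are subprojections of $q$ of any trace in $[0,\tr(q)]$ and subprojections of $1-q$ of any trace in $[0, 1-\tr(q)]$, so $q'$ may be obtained from $q$ by adding or subtracting a small orthogonal subprojection of mass $|\tr(q) - \lambda|$, giving $\tnorm{q' - q} \leq \sqrt{2\epsilon_{n-1}}$. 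By Corollary~\ref{flatfund} applied to $q'$, there is a rational point $y_n$ with $y_n^\flat < 1$ and $\tnorm{y_n - q'}$ arbitrarily small; taking it small compared to $\epsilon_n^2$ yields an $\epsilon_n$-quasi-projection with $|\tr(y_n) - \lambda| < \epsilon_n$, and by the triangle inequality $\tnorm{y_n - y_{n-1}} < \delta_n$ provided $\delta_n$ dominates $\sqrt{\epsilon_{n-1}}$. Taking $\epsilon_n = 2^{-2n-C}$ and $\delta_n = 2^{-n}$ for a sufficiently large constant $C$ works; the base case $n=0$ is identical but starts from an arbitrary projection of trace $\lambda$ in $M$.

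Once $(y_n)$ has been built, it is 2-norm Cauchy by (iii), so converges to some $y \in M$ with $\onorm{y} \leq 1$ (the unit ball of $M$ being 2-norm closed). Since multiplication is 2-norm continuous on bounded subsets of $M$, the quasi-projection relations $\tnorm{y_n - y_n^*}, \tnorm{y_n - y_n^2} \to 0$ pass to the limit, giving $y = y^* = y^2$; and $\tr(y) = \lim \tr(y_n) = \lambda$ exactly, so the input $k$ may simply be discarded for the trace requirement. The $\d$-algorithm for the projection returns $y_{m+C'}$ on input $m$ for a fixed offset $C'$ large enough that $\tnorm{y - y_{m+C'}} < 2^{-m}$. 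The entire procedure is uniformly computable in the codes $x$ and $r$, yielding the required $f_{ap}$.

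The main obstacle is showing the search at each step terminates, i.e.\ that a valid $y_n$ exists; the trace-adjustment argument---and the resulting square-root loss forcing $\epsilon_n$ to be quadratically smaller than $\delta_n$---is exactly what is required to make a rational point close to $y_{n-1}$, close in trace to $\lambda$, and a quasi-projection all simultaneously.
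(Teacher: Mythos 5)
Your proof is correct, but it takes a genuinely different route from the paper's. The paper never adjusts the trace of the approximating projection: at each step it takes any projection $q$ with $\tnorm{q - x_n} < \epsilon_n$ (so $\tr(q)$ drifts from $\lambda$ by at most $\epsilon_n$ per step by Cauchy--Schwarz), and then finds the next quasi-projection $\epsilon_{n+1}$-close to that same $q$. This yields a clean Cauchy bound $\tnorm{x_{n+1}-x_n} < \epsilon_n + \epsilon_{n+1}$ with no square-root loss, at the cost that the accumulated trace error $2\sum_i \epsilon_i$ is only controlled, not zero; that is why the paper's conclusion is ``trace within $2^{-k}$'' and why $\epsilon_n$ is built to depend on $k$. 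You instead re-center at each step: you replace $q$ by a projection $q'$ of trace exactly $\lambda$, obtained by adding or removing an orthogonal subprojection of mass $|\tr(q)-\lambda|$, exploiting the II$_1$-factor property that subprojections of every trace exist. The adjustment costs you $\tnorm{q'-q} = \sqrt{|\tr(q)-\lambda|}$, forcing the quasi-projection tolerance $\epsilon_n$ to shrink quadratically faster than the Cauchy increment $\delta_n$; in exchange you get trace exactly $\lambda$ in the limit, which is actually the content of the paper's subsequent Lemma~\ref{projlemma} (the paper reaches that result by a different bootstrap through nested corners). Your version is therefore a correct, somewhat stronger, direct proof; the paper keeps this lemma minimal and elementary precisely so it can be used as a primitive in \ref{projlemma} and \ref{orthproj}.
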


\begin{proof}
    For $n \in \bN$, let $\epsilon_n = 2^{-n-k-2}$.
    We effectively determine a sequence $(x_n)_{n\in \bb N}$ of rational points such that $x_n$ is a $\epsilon_n$-quasi-projection, $\tnorm{x_{n+1} - x_n} < \epsilon_{n+1} + \epsilon_n $ and $|\tr(x_{n}) - \lambda| < \epsilon_{n} + 2\sum_{i=1}^{n-1} \epsilon_i$.
    
    First, search for the first $\epsilon_1$-quasi-projection $x_1$ such that $|\tr(x_1) - \lambda| < \epsilon_1$.
    Such a rational point must exist by Corollary \ref{flatfund} since $M$ has a projection with trace $\lambda$.
    
    Assuming $x_n$ has already been determined, search for the first $\epsilon_{n+1}$-quasi-projection $x_{n+1}$ such that $\tnorm{x_{n+1} - x_n} < \epsilon_{n+1} + \epsilon_n$ and $|\tr(x_{n+1}) - \lambda| < \epsilon_{n+1} + 2\sum_{i=1}^{n} \epsilon_i$.
    Such a rational point must exist by Corollary \ref{flatfund} since if $q$ is one of the projections $\epsilon_n$-close to $x_n$, then $|\tr(q) - \lambda| < \epsilon_n + \epsilon_n + 2\sum_{i=1}^{n-1} \epsilon_i$ by Cauchy-Schwarz, so any $\epsilon_{n+1}$-quasi-projection $\epsilon_{n+1}$-close to $q$ suffices.

    Now, observe that $(x_n)_{n\in \bb N}$ converges to a projection $p$ and $$|\tr(p) - \lambda| \leq 2\sum_{i=1}^{\infty} \epsilon_i < 2^{-k}.$$
\end{proof}

\begin{lem}\label{projlemma}
There is a computable function $f_{proj}:\bb N^2\to \bb N$ such that if $x$ is the code of a $\d$-computable presentation $M^\#$ of a II$_1$ factor $M$ and $r$ is the code of a $\d$-computable real number $\lambda \in (0,1)$, then $f_{proj}(x,r)$ is the code of a $\d$-computable projection of $M^\#$ with trace exactly $\lambda$.
\end{lem}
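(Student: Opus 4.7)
The plan is to construct a decreasing sequence of $\d$-computable projections $p_1\geq p_2\geq\cdots$ whose traces approach $\lambda$ from above at an effective rate, and then take the limit. The point of making the sequence decreasing is that each difference $p_n-p_{n+1}$ is then itself a projection, so $\tnorm{p_n-p_{n+1}}=\sqrt{\tr(p_n)-\tr(p_{n+1})}$; this converts a trace bound directly into a $2$-norm bound. Using the $\d$-computability of $\lambda$ together with $\lambda\in(0,1)$, I would first extract a positive rational $\alpha<\min(\lambda,1-\lambda)$ by searching a rational approximation of $\lambda$ until both $\lambda$ and $1-\lambda$ are visibly positive to sufficient precision. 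Set $\delta_n:=\alpha/2^n$. For $p_1$, apply Lemma \ref{aprojlemma} to $M^\#$ with target the $\d$-computable real $\lambda+\delta_1/2\in(0,1)$ and precision $\delta_1/4$, so that $\tr(p_1)\in(\lambda+\delta_1/4,\lambda+3\delta_1/4)$.

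For the inductive step, suppose $p_n$ has been built with $\tr(p_n)\in(\lambda+\delta_n/4,\lambda+3\delta_n/4)$. I would use Lemma \ref{cornerlemma} to pass to the canonical $\d$-computable presentation $(p_nMp_n)^\#$ of the II$_1$ factor $p_nMp_n$, whose normalized trace is $\tau_n(x)=\tr(x)/\tr(p_n)$, with $\tr(p_n)$ a $\d$-computable real by Corollary \ref{tracelemma}. Because $\lambda+\delta_{n+1}/2=\lambda+\delta_n/4<\tr(p_n)$, the rescaled target $(\lambda+\delta_{n+1}/2)/\tr(p_n)$ is a $\d$-computable real in $(0,1)$, so Lemma \ref{aprojlemma} applied inside $(p_nMp_n)^\#$ with this target and precision $\delta_{n+1}/4$ yields a $\d$-computable projection $p_{n+1}\in p_nMp_n$ (hence automatically $p_{n+1}\leq p_n$) with $\tr(p_{n+1})\in(\lambda+\delta_{n+1}/4,\lambda+3\delta_{n+1}/4)$, uniformly in the data.

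Then $\tnorm{p_n-p_{n+1}}<\sqrt{\delta_n}=\sqrt\alpha/2^{n/2}$, whose tails decay at an effective rate depending only on $\alpha$, so $(p_n)$ is Cauchy in $2$-norm with an effective modulus and the limit $p$ is a $\d$-computable point of $M^\#$. Being a decreasing limit of projections, $p=\bigwedge_n p_n$ is itself a projection, and normality of the trace gives $\tr(p)=\lim\tr(p_n)=\lambda$. I do not anticipate a substantial obstacle; the main thing to keep straight is uniformity at every stage (codes for $M^\#$, for $\lambda$, for each corner $(p_nMp_n)^\#$, and for each $p_n$) so that the pieces compose into a single computable function $f_{proj}$, together with the small arithmetic inequality $\lambda+\delta_{n+1}/2<\tr(p_n)$ that ensures the rescaled target stays in $(0,1)$ and the induction can proceed indefinitely.
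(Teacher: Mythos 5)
Your proposal is correct and follows essentially the same route as the paper: build a decreasing sequence of $\d$-computable projections by repeatedly passing to the corner $(p_nMp_n)^\#$ via Lemma \ref{cornerlemma} and invoking Lemma \ref{aprojlemma} with a rescaled target, then pass to the $2$-norm limit using $\tnorm{p_n-p_{n+1}}^2=\tr(p_n)-\tr(p_{n+1})$. The only cosmetic differences are that the paper starts from $p_0=1$ and effectively searches at each stage for a precision parameter $k$ with $\lambda/\tr(p_n)+2^{-k}<1$, whereas you fix a rational $\alpha<\min(\lambda,1-\lambda)$ up front and use the schedule $\delta_n=\alpha/2^n$ to achieve the same effect.
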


\begin{proof}

    We effectively determine a decreasing sequence $(p_n)_{n\in \bb N}$ of $\d$-computable projections of $M^\#$ such that $\tr_M(p_n) \in (\lambda, \lambda + 2^{-n})$.
    Set $p_0 = 1$.
    Assuming $p_n$ has already been determined, we find $p_{n+1}$.
    By Lemma \ref{cornerlemma}, we can effectively find a code $x_{p_n}$ for the $\d$-computable corner presentation $(p_nMp_n)^\#$.
    Find $k \in \bb{N}$ such that $k \geq n + 1$ and $\lambda/tr_M(p_n) + 2^{-k} < 1$.
    By Lemma \ref{aprojlemma}, we can effectively find a $\d$-computable projection $p_{n+1}$ of $(p_n M p_n)^\#$ such that $$\tr_{p_nMp_n}(p_{n+1}) \in (\lambda/\tr_M(p_n), \lambda/tr_M(p_n) + 2^{-k}),$$ namely the projection determined by $f_{ap}(x_{p_n}, \lambda/\tr_M(p_n) + 2^{-k-1}, k+1)$.
    Thus $p_{n+1} \leq p_n$,  $p_{n+1}$ is a $\d$-computable point of $M^\#$, and $\tr_M(p_{n+1}) \in (\lambda, \lambda + 2^{-(n+1)})$.

    Observe \[\tnorm{p_{n+1} - p_n}^2 = \tr(p_{n}) - \tr(p_{n+1}) < 2^{-n},\] so $(p_n)_{n\in \bb N}$ is a Cauchy sequence of projections, whose limit we denote by $p$.  Then $p$ is a projection, $\tr_M(p) = \lambda$, and $\norm{p_n - p}_2 = \sqrt{\tr(p_n) - \tr(p)} < 2^{-n/2}$.
    Therefore $p$ is a $\d$-computable projection of $M^\#$ of trace exactly $\lambda$.

\end{proof}

\begin{lem}\label{orthproj}
There is a computable function $f_{op}:\bb N^3\to \bb N$ such that if $x$ is the code of a $\d$-computable presentation of a II$_1$ factor $M$, $r$ is the code for a $\d$-computable real number $\lambda\in (0,1)$ and $n$ is $\lfloor 1/\lambda\rfloor$, then $f_{op}(x,r,n)$ is the code for a sequence $p_1,\ldots,p_{n+1}$ of mutually orthogonal $\d$-computable projections of $M^\#$ with $\tr(p_i)=\lambda$ for $i=1,\ldots,n$ and $\tr(p_{n+1})=1-n\lambda$.
\end{lem}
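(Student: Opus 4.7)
The plan is to iteratively apply Lemma \ref{projlemma} inside successively smaller corners, peeling off one projection of $M$-trace $\lambda$ at each step.

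Concretely, suppose I have already constructed mutually orthogonal $\d$-computable projections $p_1, \ldots, p_{k-1}$ of $M^\#$, each of $M$-trace $\lambda$. Set $q_{k-1} := 1 - p_1 - \cdots - p_{k-1}$, a $\d$-computable projection of $M$-trace $1 - (k-1)\lambda$. By Lemma \ref{cornerlemma}, I effectively obtain a code for the $\d$-computable corner presentation $(q_{k-1} M q_{k-1})^\#$. For $1 \le k \le n-1$, I apply Lemma \ref{projlemma} in this corner with target compression-trace $\mu_k := \lambda/(1 - (k-1)\lambda)$, a $\d$-computable real lying in $(0,1)$ since $(n-1)\lambda = n\lambda - \lambda \le 1 - \lambda < 1$. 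By the remark following Lemma \ref{cornerlemma}, the resulting projection $p_k$ is a $\d$-computable point of $M^\#$, of $M$-trace $\lambda$, and orthogonal to $p_1, \ldots, p_{k-1}$.

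At the final stage I need to split $q_{n-1}$ into $p_n$ (of $M$-trace $\lambda$) and $p_{n+1}$ (of $M$-trace $1-n\lambda$). Their compression-traces inside $q_{n-1} M q_{n-1}$ are $\mu_n := \lambda/(1-(n-1)\lambda) \in (0,1]$ and $1 - \mu_n = (1-n\lambda)/(1-(n-1)\lambda) \in [0,1)$. I invoke Lemma \ref{projlemma} in the corner with target $1 - \mu_n$ to produce $p_{n+1}$, and then set $p_n := q_{n-1} - p_{n+1}$. The entire construction is uniform in codes for $x$, $r$, and $n$, yielding the required function $f_{op}$.

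The main obstacle is the boundary case $\lambda = 1/n$, in which $\mu_n = 1$ and $p_{n+1} = 0$: Lemma \ref{projlemma} must then be applied with target $0$, which lies outside its stated domain $(0,1)$. Inspecting the proof shows, however, that the algorithm extends uniformly to targets in $[0,1)$: for target $0$, the iterative step produces a decreasing sequence of projections with strictly positive but vanishing trace (adjusting each $f_{ap}$ call slightly so the zero projection cannot be returned), whose limit is the zero projection. With this small extension, the edge case is handled uniformly together with the generic case $\lambda \ne 1/n$.
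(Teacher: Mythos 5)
Your proof follows essentially the same construction as the paper's: iteratively peel off mutually orthogonal projections of trace $\lambda$ from successive corners using Lemma \ref{projlemma}, with only a cosmetic difference at the final stage (the paper produces $p_n$ directly via Lemma \ref{projlemma} and defines $p_{n+1}$ by subtraction, whereas you produce $p_{n+1}$ and define $p_n$ by subtraction). The boundary case $\lambda = 1/n$ that you flag is genuine and is encountered by the paper's argument too (there the final corner trace is $1$ rather than $0$, still outside the domain of Lemma \ref{projlemma}), though the paper does not acknowledge it; your proposed extension of Lemma \ref{projlemma} to degenerate targets, using Lemma \ref{aprojlemma} with targets bounded away from the endpoint so that nonzero projections are produced at each stage, is a reasonable way to close that gap uniformly.
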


\begin{proof}
We effectively determine a sequence $(p_i)_{i=1}^n$ of mutually orthogonal $\d$-computable projections of $M^\#$ such that $\tr(p_i) = \lambda$.
By Lemma \ref{projlemma}, we can effectively find a $\d$-computable projection $p_1$ in $M^\#$ of trace $\lambda$.
Assuming $p_k$ has already been determined, we find $p_{k+1}$.
Let $q = 1 - \sum_{i=1}^{k} p_i$, so $q$ is a $\d$-computable projection with $\tr(q) \geq \lambda$ by Lemma \ref{identitylemma}.
By Lemma \ref{projlemma}, we can effectively find a $\d$-computable projection $p_{k+1}$ in $(qMq)^\#$ with $\tr_{qMq}(p_{k+1}) = \lambda / \tr(q)$.  It follows that $p_{k+1}$ is orthogonal to $p_i$ for $i=1,\ldots,k$ and $\tr(p_{k+1}) = \lambda$.
It remains to set $p_{n+1} = 1 - \sum_{i=1}^n p_i$.
\end{proof}

As usual, we denote the fact that projections $p$ and $q$ are Murray von Neumann equivalent by writing $p\sim q$, while the notation $p\preceq q$ signifies that $p\sim q'$ for some projection $q'\leq q$.
If $v$ is a partial isometry that satisfies $v^*v = p$ and $vv^* = q$, then we say that $v$ is an \textbf{implement} of $p \sim q$.

A proof of the non-effective form of the following lemma is essentially contained in the proof of \cite[Proposition 1.1.3(b)]{connes}.

\begin{lem}\label{almostMvNequivalent}
Let $M$ be a II$_1$ factor.
Let $p,q \in M$ be projections such that $p \sim q$.
Suppose $0 < \epsilon < 1$ and $0 < \delta < \epsilon^{16}/11^{16}$.
Then for any $x\in M$, if $\onorm{x} \leq 1$, $\tnorm{x^*x - p} \leq \delta$, and $\tnorm{xx^* - q} \leq \delta$, then there is an implement $v\in M$ of $p\sim q$ with $\tnorm{x - v} < \epsilon$.
\end{lem}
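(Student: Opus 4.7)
The plan is to construct $v$ from $x$ in two stages: first, extract a partial isometry $v_1$ close to $x$ via a functional-calculus polar decomposition, whose source and range projections are close to $p$ and $q$; then, fix a reference implement $u_0$ of $p\sim q$ and correct $v_1$ by multiplying it by a suitable unitary in $pMp$ so that the supports become exactly $p$ and $q$.

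\textbf{Stage 1.} Set $a := x^*x$. From the identity $a(1-a) = a(p-a) + (a-p)(1-p)$ (using $p(1-p)=0$), together with $\onorm{a}, \onorm{1-p}\le 1$, I get $\tnorm{a(1-a)}_2 \le 2\delta$, and hence also $\tr(a(1-a)) \le 2\delta$ by the trivial inequality $\tnorm{\cdot}_1 \le \tnorm{\cdot}_2$. Let $e := \chi_{[1/2,1]}(a)$ and $f := \chi_{[1/2,1]}(xx^*)$, and define
\[ v_1 := x\cdot g(a), \qquad g(t):=t^{-1/2}\chi_{[1/2,1]}(t).\]
Standard functional calculus gives $v_1^*v_1 = e$ and $v_1 v_1^* = f$; bounding the spectral integrands by $2t(1-t)$ then yields $\tnorm{v_1-x}_2$, $\tnorm{e-p}_2$, and $\tnorm{f-q}_2$ all of order $O(\sqrt{\delta})$.

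\textbf{Stage 2.} Fix any implement $u_0 \in M$ of $p\sim q$ (which exists by hypothesis) and consider $y := u_0^* x p \in pMp$. Using the hypothesis bounds together with cyclicity of the trace, short calculations give $\tnorm{y^*y - p}_2 = O(\delta)$ and $\tnorm{yy^* - p}_2 = O(\sqrt{\delta})$, so that $y$ is an approximate unitary in the II$_1$ factor $pMp$. The standard ``approximate unitary is close to a unitary'' argument then produces a unitary $u' \in pMp$ with $\tnorm{u' - y}_2 = O(\sqrt{\delta})$: take the polar decomposition $y = w|y|$ in $pMp$, bound $\tnorm{|y|-p}_2 \le \tnorm{|y|^2-p}_2 = O(\delta)$ using the spectral inequality $(t-1)^2 \le (t^2-1)^2$ on $[0,1]$, and extend $w$ to a unitary $u' \in pMp$ by filling in a partial isometry between the equal-trace complements $p - w^*w$ and $p - ww^*$ in $pMp$.

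Set $v := u_0 u'$. A direct check gives $v^*v = p$ and $vv^* = q$, and
\[ \tnorm{v-x}_2 \;\le\; \tnorm{u'-y}_2 + \tnorm{u_0 y - x}_2 \;=\; O(\sqrt{\delta}) + \tnorm{qxp-x}_2,\]
where $\tnorm{qxp - x}_2 \le \tnorm{(1-q)x}_2 + \tnorm{x(1-p)}_2 = O(\sqrt{\delta})$; this last estimate follows from expanding $\tr(x^*x(1-p))$ as $\tr((x^*x - p)(1-p))$ and applying Cauchy--Schwarz (and similarly for the $q$-side). The main obstacle is the book-keeping: several factors of $\sqrt{\delta}$ accumulate across the spectral computations and the polar-decomposition extension in $pMp$, but the hypothesis $\delta < \epsilon^{16}/11^{16}$ is generous enough to comfortably absorb the resulting constants.
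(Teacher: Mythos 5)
Your proof is correct, but it follows a genuinely different route from the paper's, and part of it is vestigial. The paper works directly in $M$: it sets $y := qxp$, applies to $y^*y$ a $\delta$-dependent cutoff of $t^{-1/2}$ to manufacture a partial isometry $z$ with $z^*z\leq p$ and $zz^*\leq q$, estimates $\tnorm{p-z^*z}$, and then patches $z$ with a small partial isometry $w$ implementing $p-z^*z\sim q-zz^*$. You instead transport the problem into the corner $pMp$: fixing a reference implement $u_0$ of $p\sim q$, you pass to $y:=u_0^*xp\in pMp$, take its polar decomposition $y=w\lvert y\rvert$ there, extend $w$ to a unitary $u'$ of $pMp$ by filling the equal-trace complements, and set $v:=u_0u'$. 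This trades the paper's $\delta$-dependent functional calculus for the canonical polar decomposition, at the price of invoking a reference implement; both approaches ultimately appeal to the existence of partial isometries between equal-trace projections in a II$_1$ factor, so neither is really more elementary. A few small points: your Stage 1 (the construction of $v_1$ from $\chi_{[1/2,1]}(x^*x)$) plays no role in the final construction --- Stage 2 runs directly off $x$, so Stage 1 can be deleted; the labels on your two intermediate estimates are swapped ($\tnorm{y^*y-p}_2$ picks up a $\sqrt{\delta}$ from the $(1-q)x$ term while $\tnorm{yy^*-p}_2$ is actually $O(\delta)$), though both are $O(\sqrt{\delta})$, which is all you use; and you should note that the bound $\tnorm{p-w^*w}_2\le\tnorm{y^*y-p}_2$ follows because $(p-w^*w)$ annihilates $y^*y$, a step you use implicitly when extending $w$. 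With those caveats the bookkeeping does close, comfortably within the $\delta<\epsilon^{16}/11^{16}$ hypothesis, exactly as you claim.
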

\begin{proof}
Let $x\in M$ be as in the conclusion of the theorem and set $y := qxp$.
Then \[\tnorm{(1-q)x}^2 = \tr((1-q)(xx^* - q)(1-q)) \leq \tnorm{xx^* - q} \leq \delta\] and \[\tnorm{x(1-p)}^2 = \tr((1-p)(x^*x - p)(1-p)) \leq \tnorm{x^*x - p} \leq \delta,\]
so \[\tnorm{x - y} \leq \tnorm{x - qx} + \tnorm{qx - y} \leq \tnorm{(1-q)x} + \tnorm{x(1-p)} \leq 2\delta^{1/2}.\]
Furthermore, \[\tnorm{y^*y - p} \leq \tnorm{y^*y - y^*x} + \tnorm{y^*x - x^*x} + \tnorm{x^*x - p} \leq 5\delta^{1/2},\]
whence it follows that \[\tnorm{(y^*y)^2 - y^*y} = \tnorm{(y^*y)^2 - y^*yp} \leq \tnorm{y^*y - p} \leq 5\delta^{1/2}.\]
Let $f : [0,1] \to [0,1]$ be defined by $f(a) = a^{-1/2}$ if $1 - \sqrt{5}\delta^{1/4} \leq a \leq 1$ and $0$ otherwise.
Set $z := yf(y^*y)$.
Then $z^*z$ and $zz^*$ are projections satisfying $$z^*z = y^*yf(y^*y)^2 \leq p \text{ and }zz^* = y(f^*y)^2y^* \leq q.$$
By construction, 
$(1-y^*y)^2(1 - z^*z) \geq (5\delta^{1/2})(1 - z^*z)$ and $\tr((y^*y)^2(1- y^*y)^2) \leq (5)^2\delta$,
whence \begin{align*}
\tnorm{y(1-z^*z)}^4 &= \tr(y^*y(1 - z^*z))^2\\
&\leq \tr((y^*y)^2(1 - z^*z))\\
&\leq(5\delta^{1/2})^{-1}\tr((y^*y)^2(1-y^*y)^2(1-z^*z)) \\
&\leq 5\delta^{1/2}.
\end{align*}
It follows that \begin{align*}
\tnorm{y - z} &\leq \tnorm{y - yz^*z} + \tnorm{yz^*z - z} \\
&\leq \tnorm{y(1-z^*z)} + \onorm{y^*yf(y^*y)^2 - f(y^*y)} \\
&\leq \sqrt[4]{5}\delta^{1/8} + \sqrt{5}\delta^{1/4}\\
&\leq 5\delta^{1/8}.
\end{align*}
Furthermore,
\[\tnorm{z^*z - p} \leq \tnorm{z^*z - z^*y} + \tnorm{z^*y - y^*y} + \tnorm{y^*y - p} \leq 15\delta^{1/8}.\]
Observe that $\tr(q) - \tr(zz^*) = \tr(p) - \tr(z^*z) = \tnorm{p - z^*z} \leq 15\delta^{1/8}$.
Let $w$ be a partial isometry such that $w^*w = p - z^*z$ and $ww^* = q - zz^*$.
Then $$\tnorm{w}^2 = \tr(p) - \tr(z^*z) \leq 15\delta^{1/8},$$ and so $\tnorm{w} \leq 4\delta^{1/16}$.
Finally, set $v := z + w$.
Then $v$ is an implement of $p\sim q$ and \[\tnorm{x - v} \leq \tnorm{x - y} + \tnorm{y - z} + \tnorm{w} \leq 11\delta^{1/16} < \epsilon.\]
\end{proof}

\begin{defn}
    Given a presentation $M^\#$ of a II$_1$ factor $M$, projections $p$ and $q$ in $M$ with $p \sim q$, and rational $\epsilon \in (0,1)$, we say that a rational point $x$ of $M^\#$ is an \textbf{$\epsilon$-quasi-implement of $p\sim q$} if $x^\flat < 1$, $\tnorm{x^*x - p} < \epsilon^{16}/11^{16}$, and $\tnorm{xx^* - q} < \epsilon^{16}/11^{16}$.
\end{defn}

\begin{lem}
Fix a presentation $M^\#$ of a II$_1$ factor $M$, projections $p$ and $q$ in $M$ with $p \sim q$, and rational $\epsilon \in (0,1)$.
    \begin{enumerate}
        \item Every $\epsilon$-quasi-implement of $p \sim q$ is $\epsilon$-close (in $2$-norm) to an actual implement of $p \sim q$.
        \item Every implement of $p \sim q$ is $\epsilon$-close (in $2$-norm) to an $\epsilon$-quasi-implement of $p\sim q$.
        \item If $M^\#$ is $\d$-computable and $p,q$ are $\d$-computable points of $M^\#$, then the set of $\epsilon$-quasi-implements of $p \sim q$ is $\d$-c.e.
    \end{enumerate}
\end{lem}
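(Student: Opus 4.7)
The three items are direct analogues of Lemma \ref{quasi}, and my plan is to prove each in turn with arguments parallel to that earlier lemma.

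For (1), I would simply invoke Lemma \ref{almostMvNequivalent} with $\delta = \epsilon^{16}/11^{16}$. The only thing to check is that an $\epsilon$-quasi-implement $x$ satisfies the hypothesis $\onorm{x} \leq 1$, which follows from $x^\flat < 1$ since $\cdot^\flat$ is, by its definition as the norm in the universal contraction C*-algebra $\cal F(k)$, an upper bound on the operator norm in $M$. The other two hypotheses are built into the definition of an $\epsilon$-quasi-implement.

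For (2), I would apply Corollary \ref{flatfund} to the implement $v$ (which has $\onorm{v} \leq 1$) to produce a sequence of rational points $(x_n)$ with $\tnorm{x_n - v} \to 0$ and $x_n^\flat < 1$. The latter again gives $\onorm{x_n} < 1$, so continuity of multiplication with respect to the $2$-norm (bounded in operator norm) yields
\begin{align*}
\tnorm{x_n^* x_n - p} &\leq \tnorm{x_n^* x_n - x_n^* v} + \tnorm{x_n^* v - v^* v} \\
&\leq \onorm{x_n} \tnorm{x_n - v} + \onorm{v} \tnorm{x_n - v} \\
&\leq 2 \tnorm{x_n - v} \to 0,
\end{align*}
and symmetrically $\tnorm{x_n x_n^* - q} \to 0$. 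Hence for all sufficiently large $n$, $x_n$ is simultaneously an $\epsilon$-quasi-implement of $p \sim q$ and satisfies $\tnorm{x_n - v} < \epsilon$, as required.

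For (3), I would observe that each of the three defining conditions is $\d$-c.e., uniformly in the data. The condition $x^\flat < 1$ is c.e. because $x^\flat$ is a computable real number (uniformly in the code of the rational point). For the other two conditions, since $M^\#$ is $\d$-computable and $p, q$ are $\d$-computable points of $M^\#$, the elements $x^* x - p$ and $x x^* - q$ are $\d$-computable points of $M^\#$ (uniformly in $x$), and hence their $2$-norms are $\d$-computable real numbers. Strict inequality of a $\d$-computable real with a fixed rational bound is $\d$-c.e., so the intersection of the three conditions gives a $\d$-c.e. set.

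There is no serious obstacle: item (1) is essentially a restatement of Lemma \ref{almostMvNequivalent}, and items (2) and (3) are the same type of routine approximation and computability verifications already carried out in Lemma \ref{quasi}. The only minor care required is ensuring in item (2) that the same $x_n$ can be chosen to satisfy all four inequalities simultaneously, which is automatic from the convergence statements.
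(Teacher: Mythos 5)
Your proposal is correct and follows exactly the paper's own approach: the paper dispatches item (1) by Lemma \ref{almostMvNequivalent}, item (2) by Corollary \ref{flatfund}, and declares item (3) clear, which is precisely what you spell out in detail. The only cosmetic point is that Lemma \ref{almostMvNequivalent} requires a strict inequality $\delta < \epsilon^{16}/11^{16}$, so one should take $\delta$ strictly between $\max(\tnorm{x^*x-p},\tnorm{xx^*-q})$ and $\epsilon^{16}/11^{16}$ (which exists since the quasi-implement conditions are strict), rather than setting $\delta = \epsilon^{16}/11^{16}$ itself.
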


\begin{proof}
Item (1) follows from Lemma \ref{almostMvNequivalent} while item (2) follows from Corollary \ref{flatfund}.  Item (3) is clear.
\end{proof}

\begin{lem}\label{partialiso}
Suppose that $M^\#$ is a $\bf D$-computable presentation of a II$_1$ factor $M$ and $p,q$ are $\d$-computable projections of $M^\#$.
\begin{enumerate}
    \item If $p\sim q$, then there is a $\d$-computable implement of $p\sim q$,  which can furthermore be computed from codes for $M^\#$, $p$ and $q$.
    \item If $p\preceq q$, then there a $\d$-computable partial isometry $v$ of $M^\#$ such that $v^*v=p$ and $vv^*\leq q$, which can furthermore be computed from codes for $M^\#$, $p$ and $q$.
\end{enumerate}
\end{lem}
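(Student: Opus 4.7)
For part (1), I follow the same ``quasi-approximation'' template as in the proof of Lemma \ref{aprojlemma}, but with $\epsilon$-quasi-implements of $p\sim q$ playing the role of $\epsilon$-quasi-projections. Using the $\d$-c.e.ness from the previous lemma, I effectively produce a sequence $(x_n)_{n\geq 1}$ of rational points of $M^\#$ such that $x_n$ is a $2^{-n}$-quasi-implement of $p\sim q$ and $\tnorm{x_{n+1}-x_n}<2^{-n}+2^{-n-1}$. Existence of such an $x_n$ at each stage follows from clause (1) of the previous lemma, which supplies an actual implement $v'$ of $p\sim q$ within $2^{-n}$ of $x_n$ in $2$-norm, combined with Corollary \ref{flatfund} applied to $v'$ to produce a rational approximation with $\flat$-norm less than $1$. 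The sequence is Cauchy, so it converges in $2$-norm to some $v\in M$; since $\onorm{x_n}\leq x_n^\flat<1$ and the unit ball of $M$ is $2$-norm closed, $\onorm{v}\leq 1$, and then $\tnorm{x_n^*x_n-p}\to 0$ together with the operator-norm bound yields $v^*v=p$, and similarly $vv^*=q$. Thus $v$ is a $\d$-computable implement of $p\sim q$, obtained uniformly in the codes.

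For part (2), the strategy is to first produce a $\d$-computable subprojection $q'\leq q$ with $\tr_M(q')=\tr_M(p)$ and then invoke part (1) on $p\sim q'$. The edge case $p=0$ is handled by outputting $v=0$ whenever $\tr(p)$ is verified small enough, so we may assume $p\neq 0$, whence $\mu:=\tr(p)/\tr(q)\in(0,1]$ is a $\d$-computable real by Corollary \ref{tracelemma}. Were $\mu<1$ known, Lemma \ref{projlemma} in the corner $(qMq)^\#$ (coded via Lemma \ref{cornerlemma}) with target $\mu$ would directly yield $q'$; the main obstacle is that the boundary case $\mu=1$ (equivalently $\tr(p)=\tr(q)$) cannot be decided from the $\d$-computable data.

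To sidestep this, on input precision $k$ I run the following for $n=0,1,\ldots,2k-1$: compute a rational $d_n$ with $|d_n-(1-\mu)|<2^{-n-3}$ and test whether $d_n>2^{-n-2}$. If the test succeeds at some $n$, the inequality certifies $\mu<1$, Lemma \ref{projlemma} in $(qMq)^\#$ produces $q'\leq q$ with $\tr(q')=\tr(p)$ exactly, and we output a rational approximation to $q'$ within $2^{-k}$. If the test fails throughout, the estimates give $1-\mu<2^{-2k}$, so $\tnorm{q-q'}\leq\sqrt{1-\mu}<2^{-k}$ for the true $q'$ (which equals $q$ when $\mu=1$, or equals the projection produced at some later step $n^*\geq 2k$ when $\mu<1$), so outputting $q$ is within tolerance. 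Applying part (1) to $p\sim q'$ then furnishes the desired partial isometry, uniformly in the inputs.
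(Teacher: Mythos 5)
Your proof of part (1) is essentially the paper's argument verbatim: build a sequence of $2^{-n}$-quasi-implements by successive search, force Cauchyness with $\tnorm{x_{n+1}-x_n}<2^{-n-1}+2^{-n}$, and take the $2$-norm limit; the paper phrases the final step as simply ``observe that $(x_n)$ converges to an implement'' while you spell out why (uniform $\flat$-bound gives operator-norm $\le 1$, hence the products converge and $v^*v=p$, $vv^*=q$).

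For part (2) the paper takes the same high-level route you do --- use Lemma~\ref{projlemma} in the corner $(qMq)^\#$ to manufacture a $\d$-computable $q'\le q$ with $\tr(q')=\tr(p)$, then invoke part (1) --- but it simply writes $\lambda=\tr(p)/\tr(q)$ into Lemma~\ref{projlemma} without comment. You correctly observe that this lemma is stated only for $\lambda\in(0,1)$, and that the boundary cases $\lambda=0$ (i.e.\ $p=0$) and $\lambda=1$ (i.e.\ $p\sim q$) can genuinely occur under the hypothesis $p\preceq q$, yet are not $\d$-computably decidable from the given data. Your threshold/interval-halving test is the standard fix in computable analysis and it does work; this is a real gap in the paper's argument that you have sensibly patched. (Note that the $\lambda=0$ case actually arises in the paper's application of this lemma in the main theorem, when $[M:N]$ is an integer and $g_{n+1}=0$.) Two minor points you should tighten: your constants are slightly loose --- after establishing $\tnorm{q-q'}<2^{-k}$, outputting a rational approximation to $q$ costs additional error, so you should test up to $n\approx 2k+2$ (or similar) to leave room; and the $p=0$ handling needs the same care (verify $\tr(p)>2^{-2k-2}$ before committing, else output $0$) so that all precision levels approximate a single fixed $v$ rather than different partial isometries.
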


\begin{proof}

For (1), fix $n \in \mathbb{N}$ and set $\epsilon_n := 2^{-n}$.
We effectively determine a sequence $(x_n)_{n\in \bb N}$ of rational points of $M^\#$ such that $x_n$ is an $\epsilon_n$-quasi-implement of $p \sim q$ and $\tnorm{x_{n+1} - x_n} < \epsilon_{n+1} + \epsilon_n$.
First, search for an $\epsilon_1$-quasi-implement $x_1$ of $p \sim q$.
Assuming $x_n$ has already been determined, search for an $\epsilon_{n+1}$-quasi-implement $x_{n+1}$ of $p \sim q$ such that $\tnorm{x_{n+1} - x_n} < \epsilon_{n+1} + \epsilon_n$.
Such a rational point must exist since if $v$ is an implement of $p \sim q$ which is $\epsilon_n$-close to $x_n$, then any $\epsilon_{n+1}$-quasi-implement $x_{n+1}$ of $p \sim q$ which is $\epsilon_{n+1}$-close to $v$ suffices.  It remains to observe that $(x_n)_{n\in \bb N}$ converges to an implement of $p \sim q$.

To prove (2), by (1) it suffices to show that we can find a $\d$-computable projection $q'\leq q$ such that $p\sim q'$, or, equivalently, that $tr(q')=tr(p)$.  Since $p$ and $q$ are $\d$-computable elements of $M^\#$, $tr(p)$ and $tr(q)$ are $\d$-computable real numbers by Corollary \ref{tracelemma}, whence by Lemma \ref{projlemma}, we can find a $\d$-computable projection $q'\in (qMq)^\#$ whose corner trace is $tr(p)/tr(q)$, and thus $tr(q')=tr(p)$, as desired. 
\end{proof}



\section{Preliminaries on subfactor theory}

In this section, we present the basic facts from subfactor theory from \cite{jones,pimsnerpopa} needed in the rest of the paper.  
Another good reference for this material is \cite{popa}.

Throughout, by a \textbf{subfactor} we mean an inclusion of $\rm II_1$ factors $N\subseteq M$.  We let $e_N:L^2(M)\to L^2(N)$ (or simply $e$ if the context is clear) denote the canonical orthogonal projection map, noting that the restriction $E_N$ of $e_N$ to $M$ takes values in $N$; the linear map $E_N:M\to N$ is called the \textbf{conditional expectation map} from $M$ onto $N$.  We note the following properties of $E$:
\begin{itemize}
    \item $E_N(x)=x$ for all $x\in N$.
    \item $E_N(xyz)=xE_N(y)z$ for all $x,z\in N$ and $y\in M$.
    \item $E_N(x^*)=E_N(x)^*$ for all $x\in M$.
    \item $E_N(x)$ is the unique $y\in N$ such that $\tr((x-y)z)=0$ for all $z\in N$.
    \item $E_N(x)$ is the unique $y\in N$ such that $e_Nxe_N=ye_N$.
    \item $E_N$ is trace-preserving and contractive with respect to both the operator norm and the 2-norm.
\end{itemize}

An important invariant of the subfactor $N\subseteq M$ is the \textbf{index} $[M:N]$ of $N$ in $M$.  
In order to avoid introducing modules over a II$_1$ factor, we give a definition of the index in terms of the conditional expectation map due to Pimsner and Popa 
\cite{pimsnerpopa}, namely
$$[M:N]^{-1}:=\max\{\lambda \in \bb R_+: \ E_N(x)\geq \lambda x \text{ for all }x\geq 0\}.$$  This can alternatively be described by
$$[M:N]^{-1}:=\inf\left\{\frac{\|E_N(x)\|^2_2}{\|x\|^2_2} \ : \ x>0\right\}.$$

The terminology is inspired by the fact that if $L(H)\subseteq L(G)$ is the subfactor induced by a subgroup $H\leq G$, then $[L(G):L(H)]=[G:H]$.  However, unlike the case of groups, the index does not necessarily have to be an integer.  A landmark result of Jones states that $[M:N]$ must always be an element of the set $\{4\cos^2(\frac{\pi}{n}) \ : n=3,4,5,\ldots\}\cup [4,\infty]$.  Moreover, all of these values can be realized as indices of subfactors of $\cal R$.

Associated to the subfactor $N\subseteq M$ is the \textbf{Jones basic construction}, which is the von Neumann algebra $M_1:=( M\cup \{e_N\})''\subseteq \cal B(L^2(M))$ generated by $M$ and $e_N$.  We recall the following basic facts about $M_1$:

\begin{itemize}
    \item $M+Me_NM$ is a weakly dense $*$-subalgebra of $M_1$.
    \item $M_1$ is a II$_1$ factor if and only if $[M:N]<\infty$.  
\end{itemize}
Assuming $[M:N]<\infty$, we also have:
\begin{itemize}
    \item $[M_1:M]=[M:N]$.
    \item For $x\in M$, we have $\tr(xe_N)=[M:N]^{-1}\tr(x)$.  In particular, 
    $\tr(e_N)=[M:N]^{-1}$
    and
    $\tnorm{xe_N}^2=[M:N]^{-1}\tnorm{x}^2$
    for any $x\in M$.
    \item $E_M(e_N)=\tr(e_N)\cdot 1=[M:N]^{-1}\cdot 1$.
    \item For  $x\in M_1$, $y=[M:N]E_M(xe_N)$ is the unique element of $M$ such that $xe_N=ye_N$.  In particular, $M_1=Me_NM:=\operatorname{span}\{xe_Ny \ : \ x,y\in M\}$.
    \\\
\end{itemize}

Since $M\subseteq M_1$ is a subfactor, one can consider the associated basic construction, which is denoted $M_2$.  Continuing this process leads to the \textbf{Jones tower} $$N\subseteq M_0 := M\subseteq M_1\subseteq M_2\subseteq \cdots \subseteq M_n\subseteq M_{n+1}\subseteq \cdots.$$

Suppose that $[M:N]<\infty$ has integer part $n$.  Then there exist elements $m_1,\ldots,m_{n+1}$ of $M$ with the following properties:
\begin{itemize}
    \item $E_N(m_j^*m_k)=0$ for all distinct $j,k\in \{1,\ldots,n+1\}$.
    \item $E_N(m_j^*m_j)=1$ for all $j\in \{1,\ldots,n\}$.
    \item $E_N(m_{n+1}^*m_{n+1})$ is a projection of trace $[M:N]-n$.
\end{itemize}
(By convention, $m_{n+1}=0$ if $[M:N]\in\mathbb{N}$.)
Any such family of elements is called a \textbf{Pimsner-Popa basis of $M$ over $N$}.  The terminology is inspired by the fact that if $m_1,\ldots,m_{n+1}$ is a Pimsner-Popa basis of $M$ over $N$, then every $x\in M$ can be written uniquely as $x=\sum_{j=1}^{n+1}m_jx_j$ with each $x_j\in N$ for $j=1,\dots, n$ and $x_{n+1}\in E_N(m_{n+1}^*m_{n+1})N$.  Other important properties of a Pimsner-Popa basis are the following:
\begin{itemize}
    \item $\sum_{j=1}^{n+1}m_je_Nm_j^*=1$.
    \item $\sum_{j=1}^nm_jm_j^*=[M:N]$.
\end{itemize}
Pimsner-Popa bases are unique in a sense made precise in \cite{pimsnerpopa}.  Relevant for us is how they can be constructed:  Let $g_1,\ldots,g_{n+1}$ be orthogonal projections in $M_1$ with $tr(g_i)=[M:N]^{-1}$ for $i=1,\ldots,n$ and $tr(g_{n+1})=1-n[M:N]^{-1}$.  For $j=1,\ldots,n+1$, take partial isometries $v_j\in M_1$ such that $v_jv_j^*=g_j$, $v_j^*v_j=e_N$ for $j=1,\ldots,n$, and $v_{n+1}^*v_{n+1}\leq e_N$.  
A Pimsner-Popa basis is then obtained by letting $m_j$ be the unique element of $M$ such that $v_j=v_je_N=m_je_N$.

Moving forward, if the inclusion $N\subseteq M$ is clear from  context, we simply write $e$ for the Jones projection $e_N$.

Although we will not study computability of standard invariants in this article, the notion of the standard invariant will arise in examples below.
Jones' \textbf{standard invariant} consists of:
\begin{itemize}
\item 
The two towers of higher relative commutants/centralizer algebras
$$(M_0'\cap M_n)_{n\geq 0} \text{
and }
(M_1'\cap M_{n+1})_{n\geq 0},$$
which are finite dimensional by \cite{jones};
\item 
the Jones projections $e_1 \in N'\cap M_1$, $e_2\in M'\cap M_2\subset N'\cap M_2$, $e_3\in M_1'\cap M_3\subset M'\cap M_3\subset N'\cap M_3$, etc., which satisfy the Temperley-Lieb-Jones relations (see Example \ref{ex:TLJfactor} below);
and
\item 
the Markov trace $\tr$ on $M_\infty=\varinjlim M_n$ restricted to the two towers of centralizer algebras.
\end{itemize}
The standard invariant, and thus the subfactor itself, is called \textbf{finite depth} if there is a global bound on the dimensions of the centers of the higher relative commutants.

\section{The induced presentation on the Jones basic construction}

In this short subsection, we describe how a presentation $M^\#$ of the ambient factor $M$ of a subfactor $N\subseteq M$ naturally induces a presentation $M_1^\#$ of the Jones basic construction $M_1$.  We then present a few lemmas that will be used in the next section when proving the main theorem of this paper.

\begin{defn}
Fix a subfactor $N\subseteq M$ and a presentation $M^\#$ of $M$. The \textbf{induced presentation} $M_1^\#$ of $M_1$ has as its special points the special points of $M^\#$ as well as the element $e$.   
\end{defn}

Note that $M_1^\#$ is indeed a presentation of $M_1$ as the von Neumann algebra generated by the special points is contained in $M_1$ and contains both $e$ and the elements from $M$.

\begin{defn}\label{inducedpresentation}
Given a subfactor $N\subseteq M$ and a presentation $M^\#$ of $M$, the \textbf{induced presentation of $N$}, denoted $N^\#$, is the presentation of $N$ whose special points are those of the form $E_N(x)$, where $x$ is either a special point of $M^\#$ or a rational point of $M^\#$ with $x^\flat < 1$.
\end{defn}

Note that $N^\#$ is indeed a presentation of $N$ as any point in the unit ball of $N$ is the $2$-norm limit of rational points from $M^\#$ with $\flat < 1$ and $E_N$ is continuous with respect to the $2$-norm.




    

\begin{defn}
    We say that a rational point of $M_1^\#$ is in \textbf{syntactic normal form} if it is of the form $a+\sum_i b_iec_i$, where $a$ is a rational point of $M^\#$, each $b_i$ is a rational point of $N^\#$, and each $c_i$ is a rational point of $M^\#$.
\end{defn}


\begin{lem}\label{normalform}
There is a computable function which takes as input a rational point of $M_1^\#$ and returns an equivalent rational point in syntactic normal form.
\end{lem}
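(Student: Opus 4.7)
The plan is to apply, in order, a small list of rewriting rules that preserve equivalence in $M_1$: $e^* = e$, $e^n = e$ for $n\geq 1$, the defining subfactor relation $exe = E_N(x)e$ for $x\in M$, and the commutativity $ne = en$ for $n\in N$. The overall strategy is to expand the input rational point of $M_1^\#$ into a finite $\bb Q(i)$-linear combination of monomials over the alphabet consisting of the special points of $M^\#$, their adjoints, and $e$, and then to reduce each monomial individually before recombining.

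To execute this, first use $*$-distributivity together with $e^* = e$ to write the input as a $\bb Q(i)$-linear combination of monomials $s_1 s_2 \cdots s_k$ with each $s_i$ either a special point of $M^\#$, the adjoint of such a special point, or $e$. Next, inside each monomial collapse every maximal run of non-$e$ letters into a single rational point of $M^\#$ (using closure of rational points under products and adjoints) and collapse consecutive $e$'s using $e^2 = e$. Each monomial now has the uniform shape $w_0\, e\, w_1\, e\, w_2\, \cdots\, e\, w_m$ with the $w_i$ rational points of $M^\#$ (and $w_0$ or $w_m$ possibly absent). Monomials with $m=0$ are already rational points of $M^\#$ and contribute to $a$. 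For $m\geq 1$, iteratively apply $e\, w_i\, e = E_N(w_i)\,e$ for $1\le i \le m-1$, rewriting the monomial as $w_0\cdot E_N(w_1)\cdots E_N(w_{m-1})\cdot e\cdot w_m$; since each $E_N(w_i)\in N$ commutes with $e$, this can be rearranged into a single summand $b\,e\,c$ in the desired shape, with $b$ built from the $E_N(w_i)$ factors and $c$ a rational point of $M^\#$.

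The main obstacle is this last reduction: we must be able to exhibit each $E_N(w_i)$ effectively as a rational point of $N^\#$. The definition of $N^\#$ only furnishes as special points those of the form $E_N(x)$ where $x$ is a special point of $M^\#$ or a rational point with $x^\flat<1$, and in general $w_i^\flat\geq 1$. Since $w_i^\flat$ is a uniformly computable real, we can effectively find $\lambda\in\bb Q(i)$ with $|\lambda|>w_i^\flat$; then $(w_i/\lambda)^\flat<1$, so $E_N(w_i/\lambda)$ is a special point of $N^\#$ and $E_N(w_i)=\lambda\cdot E_N(w_i/\lambda)$ is a rational point of $N^\#$, computable from $w_i$. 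With this scaling trick in hand, every step of the procedure is syntactically manifest, and packaging the pieces together yields the desired computable function.
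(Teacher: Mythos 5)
Your approach matches the paper's own proof in all its essential ideas: distribute into $\bb Q(i)$-linear combinations of monomials, collapse each monomial to the shape $w_0\, e\, w_1\, e\cdots e\, w_m$, reduce to a single $e$ via $exe = E_N(x)e$, and then handle the fact that a reduced factor $E_N(w_i)$ need not literally be a special point of $N^\#$ by using linearity of $E_N$ together with the computability of $\flat$. Your scaling trick (choose $\lambda\in\bb Q(i)$ with $|\lambda|>w_i^\flat$, so that $E_N(w_i)=\lambda\,E_N(w_i/\lambda)$ with $(w_i/\lambda)^\flat<1$) is exactly the content of the paper's one-line ``use the linearity of $E_N$ and $\flat$,'' and you are right that this is where the definition of $N^\#$ (with the $\flat<1$ constraint on special points) gets used.

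There is, however, one step you assert that does not actually go through as stated. After reduction a monomial reads $w_0\cdot E_N(w_1)\cdots E_N(w_{m-1})\cdot e\cdot w_m$, and you claim it ``can be rearranged into a single summand $b\,e\,c$ \ldots\ with $b$ built from the $E_N(w_i)$ factors.'' But the leading factor $w_0$ is a rational point of $M^\#$ and in general lies outside $N$, so it cannot be folded into $b$ without leaving $N^\#$, and commuting the $E_N(w_i)$ past $e$ merely trades the problem for $w_0\,e\,\bigl(E_N(w_1)\cdots E_N(w_{m-1})\,w_m\bigr)$, where now the left factor $w_0$ is in $M^\#$ rather than $N^\#$ and the right factor is in neither $M^\#$ nor $N^\#$. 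As written, your argument (and, for that matter, the paper's terse proof, which likewise speaks of writing ``the factors on the left-hand side of $e$ as a rational point of $N^\#$'') silently drops $w_0$ and therefore does not reach the stated syntactic normal form when $w_0$ is nontrivial. This is worth flagging: either the normal form is intended to allow $b_i$ to be a product of a rational point of $M^\#$ with a rational point of $N^\#$ (which is all the downstream applications in Lemma \ref{computablerep} and Theorem \ref{maintheorem} actually use), or some additional explanation is needed for how $w_0$ is to be absorbed.
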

\begin{proof}
For each term, apply $exe = E_N(x)e$ ($x \in M$) from the left as necessary until at most a single factor of $e$ remains. Use the linearity of $E_N$ and $\flat$ to write the factors on the left-hand side of $e$ as a rational point of $N^\#$.
\end{proof}

\begin{lem}\label{computablerep}
Suppose $M^\#$ is a $\d$-computable presentation, $[M:N]<\infty$ is $\d$-computable, and $E_N:M^\#\to M^\#$ is a $\d$-computable map. 
Then there is a $\d$-computable algorithm (uniform in codes for $M^\#$, $[M:N]$, and $E_N$) which, when given a $\d$-computable point $v$ of $M_1^\#$ and $k \in \mathbb{N}$, returns a rational point $w$ of $M^\#$ such that $\tnorm{w - y} < 2^{-k}$, where $y$ is the unique element of $M$ for which $ve_N=ye_N$.
\end{lem}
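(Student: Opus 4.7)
The plan is to exploit the syntactic normal form of Lemma \ref{normalform}: for a rational point $v' = a + \sum_{i} b_i e c_i$ of $M_1^\#$, one computes directly (using $e = e_N$ and $ec_i e_N = e_N c_i e_N = E_N(c_i) e_N$) that $v'e_N = y'e_N$, where
\[
y' \;:=\; a + \sum_i b_i \, E_N(c_i) \;\in\; M.
\]
So on rational points, the assignment $v' \mapsto y'$ is explicit. Since $E_N:M^\# \to M^\#$ is $\d$-computable and each $b_i$ is a $*$-polynomial (with $\bb Q(i)$-coefficients) in special points of $N^\#$, each of which is itself of the form $E_N(x)$ for a rational point $x$ of $M^\#$ with $x^\flat < 1$, one can substitute $\d$-computable rational approximations of those special points and of the $E_N(c_i)$'s to produce a rational point $w$ of $M^\#$ approximating $y'$ in $2$-norm to arbitrary prescribed accuracy, uniformly in codes.

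The quantitative link needed to promote approximations of $v$ into approximations of $y$ is the inequality
\[
\tnorm{y' - y} \;\leq\; [M:N]^{1/2}\, \tnorm{v' - v}.
\]
This follows from the Section 4 identity $\tnorm{ze_N}^2 = [M:N]^{-1}\tnorm{z}^2$ for $z \in M$, applied to $z = y' - y$: since both $v'e_N = y'e_N$ and $ve_N = ye_N$, we have $(y'-y)e_N = (v'-v)e_N$, so
\[
\tnorm{y' - y} \;=\; [M:N]^{1/2}\tnorm{(y'-y)e_N} \;=\; [M:N]^{1/2}\tnorm{(v'-v)e_N} \;\leq\; [M:N]^{1/2}\tnorm{v' - v},
\]
where the last step uses $\tnorm{xy} \leq \tnorm{x}\onorm{y}$ and $\onorm{e_N} = 1$.

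The algorithm is then: on input $v$ and $k$, use the $\d$-computable real $[M:N]$ to produce a rational upper bound $B$ for $[M:N]^{1/2}$; from the code for $v$ as a $\d$-computable point of $M_1^\#$, obtain a rational point $v' \in M_1^\#$ with $\tnorm{v' - v} < 2^{-k-1}/B$; put $v'$ in syntactic normal form via Lemma \ref{normalform}; then use $\d$-computability of $E_N$ to produce a rational point $w$ of $M^\#$ with $\tnorm{w - y'} < 2^{-k-1}$, yielding $\tnorm{w - y} < 2^{-k}$ by the triangle inequality. The main bookkeeping obstacle is the last step: each $b_i$ and each $E_N(c_i)$ must be approximated by rational points of $M^\#$ with enough precision to control the error in $a + \sum_i b_i E_N(c_i)$. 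The $\flat$-upper bounds on operator norms of rational points (together with contractivity of $E_N$ in operator norm) supply uniform control over the finitely many $*$-algebra operations involved, so the accumulated error can be made smaller than $2^{-k-1}$ in a fashion uniform in the codes for $M^\#$, $[M:N]$, and $E_N$.
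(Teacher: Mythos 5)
Your proof is essentially identical to the paper's: pass to a rational approximation $v'$ of $v$ (with accuracy controlled by an upper bound for $[M:N]^{1/2}$), rewrite it in syntactic normal form via Lemma \ref{normalform}, replace $b_i e c_i$ by $b_i E_N(c_i)$ to get a $\d$-computable element $y'$ of $M$ with $y'e_N = v'e_N$, use the identity $\tnorm{z}^2 = [M:N]\tnorm{ze_N}^2$ for $z \in M$ to bound $\tnorm{y'-y}$ by $[M:N]^{1/2}\tnorm{v'-v}$, and finally approximate $y'$ by a rational point using $\d$-computability of $E_N$. The only difference is cosmetic: you spell out the bookkeeping of the last approximation step in more detail, while the paper simply notes that $y'$ (their $z$) is a $\d$-computable point of $M^\#$ and appeals to that directly.
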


\begin{proof}
Find a rational point $x$ of $M_1^\#$ such that $\tnorm{x-v}^2 < 2^{-2k-2}[M : N]^{-1}$.  By Lemma \ref{normalform}, we can write $x$ in syntactic normal form as $a+\sum_i b_iec_i$. 
Setting $z := a + \sum_i b_iE_N(c_i)$, observe $ze = xe$ and $z$ is a $\d$-computable point of $M^\#$.
Find a rational point $w$ of $M^\#$ such that $\tnorm{w - z} < 2^{-k-1}$.
Then 
\begin{align*}
\tnorm{z - y}^2 &= [M : N]\tnorm{(z - y)e}^2 \\
&= [M : N]\tnorm{xe - ve}^2 \\
&\leq [M : N]\tnorm{x - v}^2.
\end{align*}

Hence $\tnorm{w - y} \leq \tnorm{w - z} + \tnorm{z - y} < 2^{-k}$.

\end{proof}

\section{The main theorem}

In this section, we state and prove the main result of this paper.  Before doing so, we introduce some important terminology.





\begin{defn}
Suppose that $[M:N]<\infty$ and let $m_1,\ldots,m_{n+1}$ be a Pimsner-Popa basis of $M$ over $N$.  Let $M^\#$ be a presentation of $M$. We say that $m_1,\ldots,m_{n+1}$ is a 
\textbf{$\d$-computable Pimsner-Popa basis of $M^\#$ over $N$} if
\begin{enumerate}
    \item each $m_j$ is a $\bf D$-computable point of $M^\#$,
    \item each $E_N(m_j)$ is a $\bf D$-computable point of $M^\#$, and
    \item $E_N(m_{n+1}^*m_{n+1})$ is a $\bf D$-computable point of $M^\#$.
\end{enumerate}
\end{defn}

\begin{remark}
This definition formalizes the properties of a particular Pimsner-Popa basis used in \cite[Proposition 2.9]{gold}.  However, that proof was incorrect as it did not include the condition (3) in the previous definition.  We remedy this error in the proof of our main theorem below.
\end{remark}

\begin{defn}
Suppose that $N$ is a subfactor of $M$, and $N$ and $M$ are equipped with presentations $N^\dagger$ and $M^\#$ respectively.  
We say that $N^\dagger$ is \textbf{$\d$-computably embedded} in $M^\#$ if $M^\#$ is $\d$-computable and the inclusion map $\iota:N^\dagger\hookrightarrow M^\#$ is $\d$-computable (whence $N^\dagger$ is $\d$-computable).
\end{defn}

Recalling the definition of the induced presentation of a subfactor (Definition \ref{inducedpresentation}), the following lemma is clear:

\begin{lem}\label{condexp_pair_equiv}
If $M^\#$ is $\d$-computable, then $E_N:M^\#\to M^\#$ is $\d$-computable if and only if $N^\#$ is $\d$-computably embedded in $M^\#$.
\end{lem}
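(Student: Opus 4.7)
The plan is to handle the two directions separately; both are essentially bookkeeping exercises once the definition of the induced presentation $N^\#$ is unpacked, recalling that its special points are precisely the elements $E_N(x)$ where $x$ is either a special point of $M^\#$ or a rational point of $M^\#$ satisfying $x^\flat < 1$.

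For the direction $(\Rightarrow)$, I would assume $E_N : M^\# \to M^\#$ is $\d$-computable and aim to show that the inclusion $\iota : N^\# \hookrightarrow M^\#$ is $\d$-computable. Each special point of $N^\#$ is by construction of the form $E_N(x)$ for a special or rational point $x$ of $M^\#$, and therefore, viewed as an element of $M$, is a $\d$-computable point of $M^\#$ by hypothesis (uniformly in $x$). Rational points of $N^\#$ are $*$-polynomial combinations over $\bb Q(i)$ of these special points, and the earlier lemma guaranteeing closure of the $\d$-computable points of $M^\#$ under the $*$-algebra operations lets us effectively approximate $\iota(q)$ in $M^\#$ for any rational point $q$ of $N^\#$. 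Since $\iota$ is an isometry in $2$-norm (hence $1$-Lipschitz), this is enough.

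For the direction $(\Leftarrow)$, I would assume $\iota : N^\# \hookrightarrow M^\#$ is $\d$-computable and show $E_N$ is $\d$-computable. Given a rational point $x$ of $M^\#$ and $k \in \bb N$, the quantity $x^\flat$ is a uniformly computable real, so I can effectively locate a rational number $c > x^\flat$. Setting $x' := x/c$, represented by dividing the defining $*$-polynomial of $x$ by $c$, I obtain a rational point of $M^\#$ with $(x')^\flat = x^\flat/c < 1$, so $E_N(x')$ is by construction a special point of $N^\#$. The hypothesis that $\iota$ is $\d$-computable then produces a rational point $p'$ of $M^\#$ with $\tnorm{E_N(x') - p'} < 2^{-k}/c$, and then $p := cp'$ is a rational point of $M^\#$ with $\tnorm{E_N(x) - p} < 2^{-k}$. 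Combined with the fact that $E_N$ is $2$-norm contractive (hence $1$-Lipschitz), this verifies the definition of $\d$-computability.

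The only conceptual step is the rescaling trick in the backward direction: an arbitrary rational point $x$ of $M^\#$ need not appear directly among the arguments that seed the special points of $N^\#$, but dividing by a rational $c > x^\flat$ brings it into that class and then linearity of $E_N$ recovers $E_N(x)$. This is presumably why the authors deemed the lemma clear.
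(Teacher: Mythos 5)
Your argument is correct, and since the paper states this lemma without proof (``the following lemma is clear''), the intended argument is exactly the definitional unfolding you give: the forward direction observes that the special points of $N^\#$ are by construction $E_N$-images of rational points with $\flat$-bound at most $1$, hence $\d$-computable points of $M^\#$ whenever $E_N$ is a $\d$-computable map, after which rational points of $N^\#$ are handled by the effective closure of $\d$-computable points under the $*$-operations; and the backward direction is precisely the rescaling trick you identify, feeding $E_N(x/c)$ (a genuine special point of $N^\#$ by the choice $c>x^\flat$) into the $\iota$-algorithm and multiplying back by $c$. Two small points worth making explicit if this were written out: (i) in the forward direction, when propagating $2$-norm error through products of the approximants $a_i\approx E_N(y_i)$ one needs operator-norm control on the $a_i$, which is obtained by searching (via Corollary \ref{flatfund} and the $\d$-computability of $\tnorm{\cdot}$) for nearby rational points with bounded $\flat$; and (ii) in the backward direction, the index of $E_N(x/c)$ in the enumeration of special points of $N^\#$ is found effectively by waiting for the (computable) enumeration of $\{y : y^\flat<1\}$ to produce $x/c$, which is guaranteed to terminate because the strict inequality $(x/c)^\flat<1$ holds by construction. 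Both are routine, and your proposal captures the substance of why the authors deemed the lemma immediate.
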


We can now state our main theorem:

\begin{thm}\label{maintheorem}
Suppose that $N$ is a subfactor of $M$ such that $[M:N]<\infty$.  Fix a presentation $M^\#$ of $M$ and let $M_1^\#$ be the induced presentation of the Jones basic construction $M_1$.  Let $\bf D$ be a Turing oracle such that $M^\#$ is $\bf D$-computable.  Then the following are equivalent:
\begin{enumerate}
    \item $M_1^\#$ is $\bf D$-computable.
    \item The following two statements hold:
    \begin{enumerate}
        \item $E_N:M^\#\to M^\#$ is $\bf D$-computable.
        \item $[M:N]$ is a $\bf D$-computable real number.
        \end{enumerate}
    \item The following two statements hold:
    \begin{enumerate}
    \item There is a presentation $N^\dagger$ of $N$ which is $\d$-computably embedded in $M^\#$.
        \item There is a $\d$-computable Pimsner-Popa basis of $M^\#$ over $N$.
    \end{enumerate}
\end{enumerate}
\end{thm}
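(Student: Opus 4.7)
My plan is to cycle $(1) \Rightarrow (2) \Rightarrow (1)$ and then $(1) \Rightarrow (3) \Rightarrow (2)$; the engine throughout is the pair of identities $e_N x e_N = E_N(x) e_N$ in $M_1$ and the Markov relation $\tr_{M_1}(y e_N) = [M:N]^{-1}\tr_M(y)$ for $y \in M$, the latter giving in particular $\tnorm{y e_N}^2 = [M:N]^{-1}\tnorm{y}^2$.

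For $(1) \Rightarrow (2)$: since $e_N$ is a special point of $M_1^\#$, (2b) is immediate from $[M:N]^{-1} = \tr_{M_1}(e_N)$ and Corollary \ref{tracelemma}. For (2a), given a rational $x$ in $M^\#$, the point $e_N x e_N = E_N(x) e_N$ is rational in $M_1^\#$, so a 2-norm search over rational $y$ in $M^\#$ for one satisfying $\tnorm{y e_N - e_N x e_N}^2 < [M:N]^{-1}\cdot 2^{-2k}$ (the threshold being computable via (2b)) yields $\tnorm{y - E_N(x)} < 2^{-k}$; density of rational points of $M^\#$ in $M$ guarantees termination. Conversely, for $(2) \Rightarrow (1)$, Lemma \ref{normalform} rewrites any rational point of $M_1^\#$ as $a + \sum_i b_i e_N c_i$ with $a, c_i$ rational in $M^\#$ and each $b_i$ a formal $E_N$-image, hence $\d$-computable in $M^\#$ by (2a). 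Expanding $\tnorm{a + \sum_i b_i e_N c_i}^2$ and repeatedly applying $e_N X e_N = E_N(X) e_N$ and the Markov relation collapses each summand to a power of $[M:N]^{-1}$ (computable by (2b)) times $\tr_M$ of a product of $\d$-computable points of $M^\#$, which is $\d$-computable by Corollary \ref{tracelemma}.

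For $(1) \Rightarrow (3)$: condition (3a) is immediate from the established (2a) together with Lemma \ref{condexp_pair_equiv}, taking $N^\dagger := N^\#$. For (3b), I follow the explicit construction recalled in Section 4: with $n = \lfloor [M:N]\rfloor$ (effectively determinable, modulo a mild case distinction when $[M:N] \in \bN$), Lemma \ref{orthproj} applied in $M_1^\#$ produces mutually orthogonal $\d$-computable projections $g_1,\dots,g_{n+1}$ of the required traces; since each $g_j \preceq e_N$, Lemma \ref{partialiso} yields $\d$-computable partial isometries $v_j$ with $v_j v_j^* = g_j$, $v_j^* v_j = e_N$ for $j \leq n$, and $v_{n+1}^* v_{n+1} \leq e_N$; finally Lemma \ref{computablerep} (whose hypotheses are met by the already-established $(1) \Rightarrow (2)$) extracts the unique $m_j \in M$ with $v_j = m_j e_N$ as a $\d$-computable point of $M^\#$, and (2a) then makes $E_N(m_j)$ and $E_N(m_{n+1}^* m_{n+1})$ $\d$-computable in $M^\#$.

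For $(3) \Rightarrow (2)$: tracing the Pimsner-Popa identity $1 = \sum_j m_j e_N m_j^*$ yields $[M:N] = n + \tr(E_N(m_{n+1}^* m_{n+1}))$, $\d$-computable by (3b)(iii) and Corollary \ref{tracelemma}, giving (2b). For (2a), the naive formula $E_N(x) = \sum_j E_N(m_j) E_N(m_j^* x)$ is self-referential; instead I exploit the Pimsner-Popa decomposition $x = \sum_j m_j y_j$ with $y_j = E_N(m_j^* x)$ lying in $N$ (resp.\ in $p_{n+1} N$ for $j=n+1$, where $p_{n+1} = E_N(m_{n+1}^* m_{n+1})$), its 2-norm orthogonality $\tnorm{\sum_j m_j z_j}^2 = \sum_{j\leq n}\tnorm{z_j}^2 + \tnorm{p_{n+1} z_{n+1}}^2$ for $z_j \in N$, and the Pimsner-Popa inequality bound $\onorm{m_j} \leq \sqrt{[M:N]}$ (and hence also $\onorm{E_N(m_j)} \leq \sqrt{[M:N]}$). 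Given tolerance $2^{-k}$, set $\epsilon' = 2^{-k}/((n+1)\sqrt{[M:N]})$ and search for any rational tuple $(y_j^{(r)}) \in (N^\dagger)^{n+1}$ with $\tnorm{x - \sum_j m_j y_j^{(r)}} < \epsilon'$; 2-norm density of rational points of $N^\dagger$ in $N$ guarantees the search terminates. Orthogonality then forces $\tnorm{y_j^{(r)} - y_j} < \epsilon'$ for $j \leq n$ and $\tnorm{p_{n+1}(y_{n+1}^{(r)} - y_{n+1})} < \epsilon'$, and since $E_N(m_{n+1}) = E_N(m_{n+1}) p_{n+1}$ the combination $\sum_j E_N(m_j) y_j^{(r)}$, which is $\d$-computable in $M^\#$ thanks to (3b)(ii), lies within $(n+1)\sqrt{[M:N]}\,\epsilon' = 2^{-k}$ of $E_N(x) = \sum_j E_N(m_j) y_j$. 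The main obstacle is precisely this final step: one must circumvent the self-referential formula for $E_N$ via the orthogonality-certified 2-norm search in $N^\dagger$.
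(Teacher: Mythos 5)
Your proposal is correct and follows the paper's overall architecture: $(1)\Leftrightarrow(2)$ via the normal-form/Markov trick, $(1)\Rightarrow(3)$ via Lemmas~\ref{orthproj}, \ref{partialiso}, and \ref{computablerep}, and $(3)\Rightarrow(2)$ via a search for Pimsner--Popa coefficients in $N^\dagger$. Two places diverge. In $(2)\Rightarrow(1)$ you put $x$ itself in syntactic normal form and then multiply out $\tnorm{a+\sum_ib_iec_i}^2$ by hand; the paper instead applies Lemma~\ref{normalform} directly to the rational point $x^*x$, which is slightly cleaner but mathematically the same.

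The more interesting divergence is in $(3)\Rightarrow(2)$(a), which you flag as ``the main obstacle.'' You and the paper both search for rational $y_1,\dots,y_{n+1}\in N^\dagger$ making $y:=\sum_j m_jy_j$ close to $x$ in $2$-norm (since $m_{n+1}=m_{n+1}p_{n+1}$, your $\sum_j m_jy_j^{(r)}$ and the paper's $\sum_{j\le n}m_jy_j+m_{n+1}p_{n+1}y_{n+1}$ are the same object), and both output $E_N(y)=\sum_jE_N(m_j)y_j$ as the desired $\d$-computable approximant. Where you differ is the error estimate: you invoke the $2$-norm orthogonality of the Pimsner--Popa decomposition to force each $\tnorm{y_j^{(r)}-y_j}$ small, then apply the operator-norm bound $\onorm{E_N(m_j)}\le\sqrt{[M:N]}$ termwise (and you correctly insert $E_N(m_{n+1})=E_N(m_{n+1})p_{n+1}$ to handle the last term). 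That works, but the paper bypasses all of this with a one-line observation you overlooked: $E_N$ is a contraction in $2$-norm, so $\tnorm{E_N(x)-E_N(y)}\le\tnorm{x-y}<2^{-k}$ directly. Your argument is sound but substantially longer than necessary; the ``self-referential formula'' issue is dissolved by contractivity alone, without any orthogonality or operator-norm machinery.
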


\begin{proof}

We first prove the equivalence of (1) and (2).
    Suppose that $M_1^\#$ is $\bf D$-computable.  Since $e$ is a special point of $M_1^\#$, we have $[M:N]^{-1}=\tr(e)=\|e\|_2^2$ is $\bf D$-computable, and thus so is $[M:N]$.  
    Furthermore, $E_N:M^\#\to M^\#$ is $\d$-computable: given a rational point $x$ of $M^\#$ and $k \in \mathbb{N}$, search for a rational point $y$ of $M^\#$ such that $\tnorm{ye-exe}^2 < [M : N]^{-1}2^{-2k}$, in which case \[\tnorm{y-E_N(x)}^2=[M:N]\tnorm{(y-E_N(x))e}^2=[M:N]\tnorm{ye-exe}^2 < 2^{-2k}.\]

Conversely, suppose that $E_N:M^\#\to M^\#$ is $\bf D$-computable and $[M:N]$ is $\bf D$-computable.
Given a rational point $x$ of $M_1^\#$, by Lemma \ref{normalform} we can write $x^*x$ in syntactic normal form as $a + \sum_i b_iec_i$. 
Then \[\tnorm{x}^2 = \tr(x^*x) = \tr(a) + \sum_i \tr(b_iec_i) = \tr(a) + [M : N]^{-1}\sum_i \tr(b_ic_i),\] which is $\d$-computable by Corollary \ref{tracelemma} and Lemma \ref{condexp_pair_equiv}. 

We now prove that (1) implies (3). Suppose (1), equivalently (2), holds.
Trivially then, $N^\#$ is $\d$-computably embedded in $M^\#$ by Lemma \ref{condexp_pair_equiv}.
To finish proving (3), we apply Lemma \ref{orthproj} to find $\d$-computable projections $g_1,\ldots,g_{n+1}$ of $M_1^\#$ with $$\tr(g_i)=[M:N]^{-1} \text{ for } i=1,\ldots,n, \quad \tr(g_{n+1})=1-n[M:N]^{-1}.$$  By Lemma \ref{partialiso}, we effectively find $\d$-computable partial isometries $v_1,\ldots,v_{n+1}$ of $M_1^\#$ such that $v_jv_j^*=g_j$ for $j=1,\ldots,n+1$, $v_j^*v_j=e$ for $j=1,\ldots,n$, and $v_{n+1}^*v_{n+1}\leq e$.
By Lemma \ref{computablerep}, we can effectively find $\d$-computable points $m_1,\ldots,m_{n+1}$ of $M^\#$, where each $m_j$ is the unique element of $M$ that satisfies $v_je = m_je$.
It follows that $m_1,\ldots,m_{n+1}$ is a $\d$-computable Pimsner-Popa basis of $M^\#$ over $N$.

Finally, we prove that (3) implies (2).  Let $N^\dagger$ be $\d$-computably embedded in $M^\#$ and let $m_1,\ldots,m_{n+1}$ be a $\d$-computable Pimsner-Popa basis of $M^\#$ over $N$.  The statement that $E_N:M^\#\to M^\#$ is $\bf D$-computable is \cite[Proposition 2.9]{gold}, but the argument there is incorrect as it is missing the third item in our definition of a $\d$-computable Pimsner Popa basis of $M^\#$ over $N$.  For that reason, we repeat the argument again. Fix a rational point $x$ of $M^\#$ and $k \in \mathbb{N}$.
By our assumptions, we can find rational points $y_1,\ldots,y_{n+1}$ of $N^\dagger$ such that $\tnorm{x - y} < 2^{-k}$, where $y = \sum_{j=1}^n m_jy_j + m_{n+1}E_N(m_{n+1}^*m_{n+1})y_{n+1}$.
Then $$\tnorm{E_N(x) - E_N(y)} \leq \tnorm{x - y} < 2^{-k},$$ where \[E_N(y) = \sum_{j=1}^n E_N(m_j)y_j + E_N(m_{n+1})E_N(m_{n+1}^*m_{n+1})y_{n+1}\] is a $\d$-computable point of $M^\#$, uniformly in $y$.
Thus $E_N$ is $\d$-computable.
It remains to show that $[M:N]$ is $\d$-computable.  However, this follows from the fact that $[M : N] = \tr(\sum_{j=1}^{n+1}m_jm_j^*)$. 
\end{proof}

\begin{defn}
Suppose that $N$ is a subfactor of $M$ with $[M : N] < \infty$ and $M^\#$ is a $\d$-computable presentation of $M$.  We say that $N$ is a \textbf{$\d$-computable subfactor} of $M^\#$ if the equivalent conditions of Theorem \ref{maintheorem} hold.
\end{defn}

\begin{remark}
    Note that statement (b) in item (2) of the main theorem automatically holds if $[M:N]\in [1,4]$ (where it equals the computable real number $4\cos^2(\frac{\pi}{n})$ for some integer $n\geq 3$) or if $N\subset M$ has finite depth, since then it is a cyclotomic integer \cite{ENO05}.
\end{remark}

\begin{example}
By Theorem \ref{maintheorem}, if $N \subseteq M$ is a subfactor and $[M : N]$ is not $\d$-computable, then $N$ is not a $\d$-computable subfactor of $M^\dagger$ for any presentation $M^\dagger$ of $M$.
In particular, $\cal R$ has subfactors of any possible index, but only for countably many of those subfactors is there a presentation $\cal{R}^\dagger$ of $\cal{R}$ for which it is a $\d$-computable subfactor.
On the other hand, it is known there are uncountably many nonisomorphic subfactors $(N_\alpha)_{\alpha<2{^\omega}}$ of $\cal R$ of index $6$ \cite{bnp07}.\footnote{More is true:
the non-isomorphic subfactors of index 6 from \cite{bnp07} all share the same standard invariant.
The article \cite{bv15} constructed a family of subfactors at index 6, all with the same standard invariant $A_3*D_4$, which cannot be classified by countable structures.
}
If we consider the standard presentation $\cal{R}^\#$ of $\cal{R}$, which is computable, then only countably many of the conditional expectation maps $E_{N_\alpha}:\cal R^\#\to \cal R^\#$ can be $\d$-computable, so for all but countably many $\alpha < 2^\omega$, the subfactor $N_\alpha$ is not a $\d$-computable subfactor of $\cal{R}^\#$ even though its index is computable.
\end{example}


We can iterate Theorem \ref{maintheorem} to conclude the $\bf D$-computability of all the factors appearing in the Jones tower from the $\bf D$-computability of the basic construction.  In what follows, we recursively equip the iterated basic constructions $M_n$ with their canonical induced presentations $M_n^\#$.

\begin{cor}
Suppose that $N\subseteq M^\#$ is a $\d$-computable subfactor.  Then $M_n\subseteq M_{n+1}^\#$ is a $\d$-computable subfactor for all $n\geq 0$.  In other words, if $M_1^\#$ is $\bf D$-computable, then $M_n^\#$ is $\bf D$-computable for all $n\geq 1$.
\end{cor}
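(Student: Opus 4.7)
The plan is to proceed by induction on $n \geq 0$, with inductive hypothesis that $M_n \subseteq M_{n+1}^\#$ is a $\d$-computable subfactor. The base case $n=0$ is exactly the hypothesis of the corollary. For the inductive step, suppose $M_{n-1} \subseteq M_n^\#$ is $\d$-computable. Invoking Theorem \ref{maintheorem} at this level (direction $(1) \Rightarrow (2)$), one obtains that $M_{n+1}^\#$ is $\d$-computable, $E_{M_{n-1}} : M_n^\# \to M_n^\#$ is $\d$-computable, and $[M_n : M_{n-1}] = [M:N]$ is $\d$-computable. To conclude that $M_n \subseteq M_{n+1}^\#$ is $\d$-computable, I would apply Theorem \ref{maintheorem} (direction $(2) \Rightarrow (1)$) to the subfactor $M_n \subseteq M_{n+1}$ with presentation $M_{n+1}^\#$, which reduces the task to verifying that $[M_{n+1}:M_n] = [M:N]$ is $\d$-computable (immediate) and that $E_{M_n} : M_{n+1}^\# \to M_{n+1}^\#$ is $\d$-computable.

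For the $\d$-computability of $E_{M_n}$, I would mirror the $(2) \Rightarrow (1)$ half of the proof of Theorem \ref{maintheorem}. Since $M_{n+1}$ is the Jones basic construction of $M_{n-1} \subseteq M_n$, Lemma \ref{normalform} applies at this level (its proof is purely syntactic, using only the identity $e_{M_{n-1}} x e_{M_{n-1}} = E_{M_{n-1}}(x) e_{M_{n-1}}$ for $x \in M_n$ together with the $\flat$ bound). Thus every rational point of $M_{n+1}^\#$ can be rewritten as $a + \sum_i b_i e_{M_{n-1}} c_i$, where $a, c_i$ are rational points of $M_n^\#$ and each $b_i$ is a formal expression $E_{M_{n-1}}(y_i)$ for some rational point $y_i$ of $M_n^\#$. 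Using $E_{M_n}(e_{M_{n-1}}) = [M:N]^{-1}$ and the $M_n$-bilinearity of $E_{M_n}$, one has
\[
E_{M_n}\!\left(a + \sum_i b_i e_{M_{n-1}} c_i\right) \;=\; a + [M:N]^{-1} \sum_i b_i c_i .
\]
By the inductive hypothesis, each $b_i = E_{M_{n-1}}(y_i)$ is $\d$-computably approximable as a rational point of $M_n^\#$; standard error propagation (using $c_i^\flat$ to bound the operator norm of $c_i$) will then yield a $\d$-computable approximation to $E_{M_n}(x)$ as a rational point of $M_n^\# \subseteq M_{n+1}^\#$.

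The main obstacle should be the bookkeeping required to confirm that Lemma \ref{normalform} and Definition \ref{inducedpresentation} transfer cleanly from the base subfactor $N \subseteq M$ to the higher-level subfactor $M_{n-1} \subseteq M_n$. However, no new ideas are needed: the arguments depend only on the algebraic relations of the basic construction and the $\flat$-machinery, both of which are level-agnostic.
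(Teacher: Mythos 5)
Your proposal is correct and follows essentially the same route as the paper: induct on the level, use the inductive hypothesis (via Theorem \ref{maintheorem}) to get $\d$-computability of $M_{n+1}^\#$ and $E_{M_{n-1}}$, then close the induction via $(2)\Rightarrow(1)$ by showing $E_{M_n}:M_{n+1}^\#\to M_{n+1}^\#$ is $\d$-computable using Lemma \ref{normalform} and the identity $E_{M_n}(a+\sum_i b_i e c_i)=a+[M:N]^{-1}\sum_i b_i c_i$. The paper just phrases the two invocations of the main theorem more compactly; the mathematical content is identical.
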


\begin{proof}
Suppose $M_{n-1}$ is a $\d$-computable subfactor of $M_n^\#$ for some $n \geq 0$, where we set $M_{-1} := N$.
We show $M_n$ is a $\d$-computable subfactor of $M_{n+1}^\#$.
By Theorem \ref{maintheorem} and the fact that $[M_{n+1}:M_n]=[M:N]$ is $\d$-computable, it suffices to show that $E_{M_n}:M_{n+1}^\#\to M_{n+1}^\#$ is $\d$-computable. 
By Lemma \ref{normalform}, we can write any rational point $x$ of $M_{n+1}^\#$ in syntactic normal form as $a + \sum_i b_iec_i$, where $a$ and each $c_i$ are rational points of $M_n^\#$ and each $b_i$ is a rational point of $M_{n-1}^\#$, the presentation on $M_{n-1}$ induced by $M_n^\#$.
Then $E_{M_n}(x) = a + [M : N]^{-1}\sum_i b_ic_i$, which is a $\d$-computable point of $M_{n+1}^\#$, uniformly in $x$, as $M_n^\#$ and $E_{M_{n-1}} : M_n^\# \to M_n^\#$ are $\d$-computable.
\end{proof}

\begin{remark}
If $n \geq 1$, then $M_n$ has two potential induced presentations: one induced by $M_{n-1}^\#$ from the Jones basic construction and one induced by $M_{n+1}^\#$ as its subfactor.
While these presentations are not the same in general, they are $\d$-computably isomorphic when $M_{n+1}^\#$ and $E_{M_n} : M_{n+1}^\# \to M_{n+1}^\#$ are $\d$-computable.
\end{remark}

\section{Improvements to the main result?}

It becomes natural to wonder if any of the conditions in the main theorem are redundant.  While we are unable to accomplish this at the moment, we do mention some partial results in this regard.

Given item (2) in Theorem \ref{maintheorem} above, one may ask whether or not $E_N:M^\#\to M^\#$ being $\bf D$-computable automatically implies that $[M:N]$ must be $\bf D$-computable.  This is the best we are able to do at this point:\footnote{Recall that $\bf D'$ is the \textbf{Turing jump} of $\bf D$, which is the Turing degree which can compute the halting problem relative to $\bf D$.}

\begin{prop}\label{indexprop}
If $E_N:M^\#\to M^\#$ is $\bf D$-computable, then $[M:N]$ is $\bf D'$-computable, whence $M_1^\#$ is $\d'$-computable and a $\d'$-computable Pimsner-Popa basis of $M^\#$ over $N$ exists.
\end{prop}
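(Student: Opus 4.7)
The plan is to verify that $[M:N]^{-1}$ is right-$\d$-c.e., i.e., the set $\{q \in \bb Q \ : \ q > [M:N]^{-1}\}$ is $\d$-c.e. This suffices, because any right-$\d$-c.e.\ real is $\d'$-computable: writing such a real as $\alpha = \inf_n r_n$ for a $\d$-computable non-increasing sequence, the question ``does there exist $m$ with $r_m < r_n - \epsilon$?'' is $\d$-c.e.\ and hence decidable by $\d'$, and when the answer becomes negative we have $0 \leq r_n - \alpha \leq \epsilon$. Hence both $[M:N]^{-1}$ and $[M:N]$ will be $\d'$-computable. The remaining conclusions then follow by invoking Theorem~\ref{maintheorem} with the oracle $\d'$ in place of $\d$, since $M^\#$ and $E_N$ (both $\d$-computable) are in particular $\d'$-computable, yielding the $\d'$-computability of $M_1^\#$ and the existence of a $\d'$-computable Pimsner-Popa basis of $M^\#$ over $N$.

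The main work, and the single substantive step, is thus to verify the right-$\d$-c.e.\ property of $[M:N]^{-1}$. My plan is to use the Pimsner-Popa characterization
\[[M:N]^{-1} = \inf\left\{\frac{\|E_N(x)\|_2^2}{\|x\|_2^2} \ : \ x > 0\right\}\]
recorded in Section~4. Writing $x = y^*y$ and using the scale-invariance of the ratio in $y$, this infimum equals the infimum of $\|E_N(y^*y)\|_2^2/\|y^*y\|_2^2$ over nonzero $y \in M$ with $\onorm{y} \leq 1$. The key claim---which is where I expect the only real (albeit mild) obstacle---is that the infimum is unchanged when $y$ is further restricted to nonzero rational points of $M^\#$ with $y^\flat < 1$. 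To see this, given any bounded $y$ as above, Corollary~\ref{flatfund} provides rational points $y_n$ with $y_n^\flat < 1$ and $\tnorm{y_n - y} \to 0$; the estimate $\tnorm{y_n^*y_n - y^*y} \leq 2\tnorm{y_n - y}$ together with the 2-norm contractivity of $E_N$ then gives $\|E_N(y_n^*y_n)\|_2^2/\|y_n^*y_n\|_2^2 \to \|E_N(y^*y)\|_2^2/\|y^*y\|_2^2$.

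With this reduction in hand, the enumeration is direct. For each rational point $y$ of $M^\#$ with $y^\flat < 1$, the quantities $\|y^*y\|_2^2$ and $\|E_N(y^*y)\|_2^2$ are $\d$-computable real numbers uniformly in $y$, by Corollary~\ref{tracelemma} together with the $\d$-computability of $E_N$. Hence the strict inequality $\|E_N(y^*y)\|_2^2 < q\|y^*y\|_2^2$ is a $\d$-c.e.\ condition on rational $q$ and rational $y$, and enumerating those rationals $q$ for which such a witness $y$ eventually appears yields exactly $\{q > [M:N]^{-1}\}$, completing the argument. Everything else is standard bookkeeping plus the computability fact about right-c.e.\ reals quoted in the first paragraph.
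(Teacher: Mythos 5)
Your proof is correct and takes essentially the same approach as the paper: use the Pimsner--Popa infimum formula to show that $[M:N]^{-1}$ is the infimum of a $\d$-enumerable family of $\d$-computable reals (equivalently right-$\d$-c.e.), so that $\d'$ computes it, and then invoke Theorem~\ref{maintheorem} relative to $\d'$ for the remaining conclusions. The only difference is that you carefully spell out the passage from arbitrary positive $x$ to rational points with $y^\flat < 1$ via Corollary~\ref{flatfund} and contractivity of $E_N$, a step the paper leaves implicit.
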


\begin{proof}
By the equation $[M : N]^{-1} = \inf\big\{\tnorm{E_N(x)}^2 \tnorm{x}^{-2} : x > 0\big\}$, we can compute a decreasing sequence which converges to $[M : N]^{-1}$ from $\d$.
Thus $[M : N]^{-1}$ is $\d'$-computable.
\end{proof}

\begin{question}
If $E_N:M^\#\to M^\#$ is $\d$-computable, must $[M:N]$ be $\bf D$-computable?  In other words, is $M_1^\#$ $\d$-computable if and only if $E_N:M^\#\to M^\#$ is $\d$-computable?
\end{question}

One might also wonder if $N^\dagger$ being $\d$-computably embedded in $M^\#$ for some presentation $N^\dagger$ of $N$ implies that $N^\#$ is $\d$-computably embedded in $M^\#$, that is, if $E_N:M^\#\to M^\#$ is $\d$-computable.  In general, the following is always true:

\begin{prop}\label{jumpexpectation}
If $N^\dagger$ is $\d$-computably embedded in $M^\#$, then $E_N:M^\#\to N^\dagger$ (and hence $E_N:M^\#\to M^\#$) is $\d'$-computable.
\end{prop}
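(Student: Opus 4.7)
The plan is to $\d'$-compute, from a rational point $x$ of $M^\#$ and $k \in \bb N$, a rational point $y$ of $N^\dagger$ with $\tnorm{y - E_N(x)} < 2^{-k}$. The key reduction is Pythagoras in $L^2(M)$: since $x - E_N(x) \perp L^2(N)$, every $y \in N$ satisfies $\tnorm{y - x}^2 = \tnorm{y - E_N(x)}^2 + d$, where $d := \tnorm{x - E_N(x)}^2$. Consequently, any rational $y \in N^\dagger$ with $\tnorm{y - x}^2 < d + 2^{-2k}$ automatically satisfies the requirement, so it suffices to locate such a $y$.

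To get a handle on $d$, enumerate the rational points $(z_n)_{n\in \bb N}$ of $N^\dagger$; since $N^\dagger$ is $\d$-computably embedded in $M^\#$, each $z_n$ is a $\d$-computable point of $M^\#$ uniformly in $n$, so $\tnorm{z_n - x}^2$ is a $\d$-computable real uniformly in $n$. Density of the rational points of $N^\dagger$ in $N$ (in $2$-norm), together with the orthogonal projection characterization of $E_N$, gives $d = \inf_n \tnorm{z_n - x}^2$. Hence $s_n := \min_{i \leq n}\tnorm{z_i - x}^2$ is a $\d$-computable decreasing sequence of reals converging to $d$, so by the Shoenfield limit lemma $d$ is $\d'$-computable.

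The $\d'$-algorithm is now immediate: using $\d'$, compute a rational $q$ with $|q - d| < 2^{-2k-3}$; then using $\d$, search through the rational points of $N^\dagger$ for one, $y$, whose $\d$-computable quantity $\tnorm{y - x}^2$ can be certified (via sufficiently precise rational approximation) to be less than $q + 2^{-2k-2}$. Such a $y$ exists by density of rational points (since $d < q + 2^{-2k-3}$), and any such $y$ then satisfies $\tnorm{y - x}^2 < d + 2^{-2k}$, hence $\tnorm{y - E_N(x)} < 2^{-k}$ by the Pythagorean identity. The parenthetical claim for $E_N : M^\# \to M^\#$ follows by post-composing the output with the $\d$-computable inclusion $\iota : N^\dagger \hookrightarrow M^\#$. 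The only ``obstacle'' --- and the source of the jump from $\d$ to $\d'$ --- is that $d$ is only a right-c.e.$(\d)$ real: certifying that the decreasing sequence $(s_n)$ has converged within a given tolerance is exactly the $\Pi_1^0(\d)$ step that the Shoenfield limit lemma turns over to the jump.
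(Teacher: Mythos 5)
Your proof is correct, and it takes a genuinely different route from the paper's. The paper's argument uses the characterization of $E_N(x)$ as the unique $y\in N$ with $\tr((x-y)z)=0$ for all $z\in N$: it $\d'$-searches for a rational $y\in N^\dagger$ satisfying a $\Pi^0_1(\d)$ near-orthogonality condition (a uniform bound on $|\tr((x-y)z)|$ over rational $z$ of $N^\dagger$), then verifies that any such $y$ is $2$-norm close to $E_N(x)$ by plugging in $z=(E_N(x)-y)^*$. You instead use the orthogonal-projection picture directly: via Pythagoras you reduce the problem to computing $d=\tnorm{x-E_N(x)}^2$, observe that $d$ is right-c.e.$(\d)$ as an infimum over the enumerable set of rational points of $N^\dagger$, and invoke the Shoenfield limit lemma to get $d$ from $\d'$; the remaining search for a near-minimizer is then a plain $\Sigma^0_1(\d)$ search. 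The two arguments use the same underlying fact (orthogonality of the conditional expectation in $L^2$), but your decomposition cleanly isolates where the jump is spent --- on the single right-c.e.\ real $d$ --- which makes the ``why exactly one jump'' explanation a little more transparent. One very small point: for the search step to terminate you need the strict gap $d < q + 2^{-2k-3}$ so that some $z_n$ is certifiably below $q+2^{-2k-2}$ after a finite $\d$-computation, which you do have since $|q-d|<2^{-2k-3}$; just make sure $q$ is taken on the correct side or that you have used a genuine two-sided approximation rather than only $q \ge d$.
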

\begin{proof}
Given a rational point $x$ of $M^\#$ and $k \in \mathbb{N}$, search for a rational point $y$ of $N^\dagger$ such that $|\tr((x-y)z) | < 2^{-2k}$ for all rational points $z$ of $N^\dagger$; this can be done effectively with $\d'$. 
Let $z = (E_N(x) - y)^*$ and observe \begin{align*}
\tnorm{E_N(x) - y}^2 &= \tr((E_N(x) - y)z) \\
&= \tr((E_N(x) - x)z) + \tr((x - y)z) \\
&= \tr((x - y)z) \\
&< 2^{-2k}.
\end{align*}
 \end{proof}

 \begin{remark}
The main result of \cite{gold} shows that, under certain extra assumptions, the fact that $N^\dagger$ is $\d$-computably embedded in $M^\#$ implies that $E_N:M^\#\to N^\dagger$ is $\d$-computable.  The context of that paper is quite different as the extra hypotheses there imply that the index is necessarily infinite.
 \end{remark}

Propositions \ref{indexprop} and \ref{jumpexpectation} immediately imply:

\begin{cor}
If $N^\dagger$ is $\d$-computably embedded in $M^\#$, then $[M:N]$ is always $\d''$-computable (whence $M_1^\#$ is $\d''$-computable and a $\d''$-computable Pimsner-Popa basis of $M^\#$ over $N$ exists).
\end{cor}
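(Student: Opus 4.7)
The plan is to simply chain together the two previous propositions, observing that they relativize uniformly to an arbitrary oracle. First I would invoke Proposition \ref{jumpexpectation}: from the hypothesis that $N^\dagger$ is $\d$-computably embedded in $M^\#$, we immediately conclude that $E_N:M^\#\to M^\#$ is $\d'$-computable.

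Next I would apply Proposition \ref{indexprop}, but with the base oracle taken to be $\d'$ rather than $\d$. Since the proof of Proposition \ref{indexprop} goes through verbatim relative to any oracle (it only uses the variational characterization $[M:N]^{-1} = \inf\{\tnorm{E_N(x)}^2\tnorm{x}^{-2}:x>0\}$ together with one Turing jump to extract the infimum), the $\d'$-computability of $E_N:M^\#\to M^\#$ yields $\d''$-computability of $[M:N]$. Here one should briefly note that the ambient presentation $M^\#$ is $\d$-computable by assumption (implicit in the definition of $\d$-computable embedding), hence a fortiori $\d'$-computable, so the hypotheses of the relativized Proposition \ref{indexprop} are satisfied.

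Finally, to deduce the parenthetical conclusion, I would invoke Theorem \ref{maintheorem} relative to the oracle $\d''$: the presentation $M^\#$ is $\d''$-computable, $E_N:M^\#\to M^\#$ is $\d''$-computable (since it is even $\d'$-computable), and $[M:N]$ is $\d''$-computable, so condition (2) holds over $\d''$, giving that $M_1^\#$ is $\d''$-computable and that a $\d''$-computable Pimsner–Popa basis of $M^\#$ over $N$ exists (via the equivalence with condition (3)).

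Since the entire argument is a two-step chain of already-proved implications, there is no real obstacle; the only point that deserves a line of care is the uniform relativization of Propositions \ref{indexprop} and \ref{jumpexpectation} to an arbitrary oracle, which I would mention explicitly but not belabor.
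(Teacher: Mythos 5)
Your proposal is correct and matches the paper's intended argument exactly: the paper's proof is literally the single line ``Propositions \ref{indexprop} and \ref{jumpexpectation} immediately imply,'' and you have simply unpacked that chain, correctly noting that Proposition \ref{indexprop} must be relativized from the base oracle $\d$ to $\d'$ and that $M^\#$ remains $\d'$-computable so the relativized hypotheses are met. No gap; this is the same route, with slightly more detail spelled out.
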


\section{Examples}

In this final section, we mention a few examples illustrating our main result.

\begin{example}
Let $N$ be any II$_1$ factor and let $M:=M_n(N)=N\bar\otimes M_n(\bb C)$.  Fix a presentation $N^\#$ of $N$ and let $M^\#$ be the presentation of $M$ given by matrices of rational points of $N^\#$.  Note that if $N^\#$ is $\d$-computable, then so is $M^\#$.  Then $[M:N]=n^2$ is clearly computable and the conditional expectation map $E_N:M^\#\to M^\#$ mapping a matrix to its diagonal part is clearly $\d$-computable.  Thus $N$ is a $\d$-computable subfactor of $M^\#$.
\end{example}

\begin{example}
Suppose that $N$ is a II$_1$ factor and $G$ is a finite group.  Consider an action $G\curvearrowright^\alpha N$ by outer automorphisms.  Set $M:=N\rtimes_\alpha G$.  Fix a presentation $N^\#$ of $N$ and suppose that $N^\#$ is $\d$-computable.  Let $M^\#$ be the presentation of $M$ obtained by adding as special points the canonical unitaries $u_g$ for $g\in G$. If, in addition, each $\alpha_g$ is a $\d$-computable automorphism of $N^\#$, then the presentation $M^\#$ is $\d$-computable.  Note also that the index $[M:N]=|G|$ is clearly computable and the conditional expectation map $E_N:M^\#\to M^\#$ mapping a sum $\sum_{g\in G}x_gu_g$ to $x_e$ is $\d$-computable.  Consequently, $N$ is a $\d$-computable subfactor of $M^\#$.
\end{example}

\begin{example}
Suppose once again that we have an action $G\curvearrowright^\alpha M$ of a finite group by outer automorphisms.  Let $M^G$ be the fixed point subalgebra.  The conditional expectation $E_{M^G}:M\to M^G$ is given by $E_{M^G}(x):=\frac{1}{|G|}\sum_{g\in G}\alpha_g(x)$.  The index $[M:M^G]=|G|$ is clearly computable.  Under what conditions is $E_{M^G}:M^\#\to M^\#$ $\d$-computable (that is, when is $M^G$ a $\d$-computable subfactor of $M^\#$)?  This is indeed the case when each $\alpha_g:M^\#\to M^\#$ is a $\d$-computable automorphism.  Is the $\d$-computability of each $\alpha_g$ necessary?  For example, if $G=\bb Z_2$ and $\alpha$ is the unique nontrivial automorphism of $M$, then the $\d$-computability of $E_{M^G}:M^\#\to M^\#$ clearly implies the $\d$-computability of $\alpha:M^\#\to M^\#$.

In regards to this question, we recall that, in the current context, $M_1\cong M\rtimes G$, where the isomorphism is given by mapping $M$ identically to itself and mapping $e_N$ to the projection $e:=\frac{1}{|G|}\sum_{g\in G}u_g$.  Moreover, this map is computable when $M_1$ is equipped with its induced presentation and when $M\rtimes G$ is equipped with the presentation obtained by adding the above projection $e$ to the presentation of $M$.  Note that this presentation differs from the presentation of $M\rtimes G$ described in the previous example.  Assuming that $E_{M^G}$ is $M^\#$-computable, it appears that the condition that each $\alpha_g$ be $M^\#$-computable is equivalent to the above two presentations of $M\rtimes G$ being computably isomorphic. 

\end{example}

\begin{example}
\label{ex:TLJfactor}
We consider the construction of subfactors of $\cal R$ due to Jones \cite{jones}.  Fix a tracial von Neumann algebra $M$ and suppose that $(e_i)_{i\geq 1}$ are projections in $M$ satisfying:
\begin{enumerate}
    \item $e_ie_{i\pm 1}e_i=t e_i$ for some $t\leq 1$.
    \item $e_ie_j=e_je_i$ if $|i-j|\geq 2$.
    \item $\tr(we_i)=t\tr(w)$ for any word $w$ on $1,e_1,\ldots,e_{i-1}$.
\end{enumerate}
Letting $P:=\{e_i \ : \ i\geq 1\}''$ be the von Neumann subalgebra of $M$ generated by the $e_i$'s for $i\geq 1$ and $P_t:=\{e_i \ : \ i\geq 2\}''$ denote the subalgebra of $P$ generated by the $e_i$'s for $i\geq 2$, we have that $P\cong \cal R$ and that $P_t$ is a subfactor of $P$ of index $[P:P_t]:=\frac{1}{t}$.  Moreover, this construction is only possible if $t\leq \frac{1}{4}$ or if $t=\frac{1}{4}\sec^2(\frac{\pi}{n})$ for $n=3,4,5\ldots$

We equip $P$ and $P_t$ with the presentations $P^\#$ and $P_t\#$ consisting of the set of the appropriate set of $e_i$'s.  It is immediate from \cite[Lemma 4.1.6]{jones} that $P^\#$ and $P_t^\#$ are $\d$-computable, where $\d$ is any oracle for which $t$ is $\d$-computable.

We claim that, for any such $\d$, the subfactor $P_t$ is a $\d$-computable subfactor of $P^\#$.  Since $t$ is $\d$-computable and $[P:P_t]=\frac{1}{t}$, we have that $[P:P_t]$ is $\d$-computable.  It remains to see that $E_{P_t}:P^\#\to P^\#$ is $\d$-computable, which follows from the fact that $E_{P_t}(e_1)=t\cdot 1$.  
\end{example}


\begin{thebibliography}{FGH23}

\bibitem[BNP07]{bnp07} D. Bisch, R. Nicoara, and S. Popa, \emph{Continuous families of hyperfinite subfactors with the same standard invariant}, International Journal of Mathematics \textbf{18} (2007), 255-267.

\bibitem[BV15]{bv15} A. Brothier and S. Vaes, \emph{Families of hyperfinite subfactors with the same standard invariant and prescribed fundamental group}, Journal of Noncommutative Geometry \textbf{9} (2015), 775-796.


\bibitem[C75]{connes}  A. Connes, \emph{Outer conjugacy classes of automorphisms of factors}, Ann. Scient. \'Ec. Norm. Sup., 4${}^{\text{e}}$ s\'erie \textbf{8} (1975), 383-420.

\bibitem[ENO05]{ENO05}{P. Etingof, D. Nikshych, and V. Ostrik}, \emph{On fusion categories}, Annals of Mathematics \textbf{162} (2005), 581-642.





\bibitem[F]{foxpres} A. Fox, \emph{Computable presentations of C*-algebras}, to appear in the Journal of Symbolic Logic.
\bibitem[FGH23]{KEP} A. Fox, I. Goldbring, and B. Hart, \emph{Locally universal C*-algebras with computable presentations}, to appear in the Journal of Functional Analysis.
\bibitem[G21]{gold} I. Goldbring, \emph{Oracle computability of conditional expectations onto subfactors}, New York Journal of Mathematics \textbf{27} (2021), 1085-1095.
\bibitem[GH]{universal} I. Goldbring and B. Hart, \emph{The universal theory of the hyperfinite II$_1$ factor is not computable}, to appear in the Bulletin of Symbolic Logic.
\bibitem[GH21]{hyper} I. Goldbring and B. Hart, \emph{Operator algebras with hyperarithmetic theory}, Journal of Logic and Computation, \textbf{31} (2021), 612-629.
\bibitem[J83]{jones} V. Jones, \emph{Index for subfactors}, Inventiones Mathematicae \textbf{72} (1983), 1-26.
\bibitem[PP86]{pimsnerpopa} M. Pimsner and S. Popa, \emph{Entropy and index for subfactors}, Ann. Scient. \'Ec. Norm. Sup., 4${}^{\text{e}}$ s\'erie \textbf{19} (1986), 57-106.

\bibitem[P94]{popa} S. Popa, \emph{Classification of amenable subfactors of type {II}}, Acta Math. \textbf{172} (1994), 163-255.

\bibitem[RLL00]{rordamktheory} M. R\o rdam, F. Larsen, and N. Lausten, \emph{An Introduction to K-Theory for C*-algebras}, London Mathematical Society Student Texts \textbf{49} (2000), Cambridge University Press.

\end{thebibliography}
\end{document}